\newtheorem{theorem}{Theorem}[section]
\newtheorem{lemma}[theorem]{Lemma}
\newtheorem{proposition}[theorem]{Proposition}
\newtheorem{corollary}[theorem]{Corollary}
\theoremstyle{definition}
\newtheorem{definition}[theorem]{Definition}
\newtheorem{example}[theorem]{Example}
\newtheorem{problem}[theorem]{Problem}
\newtheorem{construction}[theorem]{Construction}
\newtheorem{remark}[theorem]{Remark}
\theoremstyle{remark}
\numberwithin{equation}{section}
\def\nquotchow{
\!\!\!
\raisebox{0.09cm}{
\scalebox{.47}{
$
\stackbin[{\rm CQ}]{\hphantom{...}\sim}{\hphantom{.}\boldsymbol{/}}
$
}
}
\!
}
\def\nqquot{
\!\!\!
\raisebox{0.09cm}{
\scalebox{.47}{
$
\stackbin[{\rm LQ}]{\hphantom{...}\sim}{\hphantom{.}\boldsymbol{/}}
$
}
}
\!
}
\def\quotchow{
\!\!\!
\raisebox{0.09cm}{
\scalebox{.47}{
$
\stackbin[{\rm CQ}]{\hphantom{...}}{\hphantom{.}\boldsymbol{/}}
$
}
}
\!
}
\def\qquot{
\!\!\!
\raisebox{0.09cm}{
\scalebox{.47}{
$
\stackbin[{\rm LQ}]{\hphantom{...}}{\hphantom{.}\boldsymbol{/}}
$
}
}
\!
}
\def\quotnull{
\!\!\!
\raisebox{0.09cm}{
\scalebox{.47}{
$
\stackbin[\!\!0]{\hphantom{...}}{\hphantom{.}\boldsymbol{/}}
$
}
}
\!
}
\def\quot{/\!\!/}
\def\mal{\! \cdot \!}
\def\rq#1{\widehat{#1}}
\def\bangle#1{\langle #1 \rangle}
\def\KK{{\mathbb K}}
\def\TT{{\mathbb T}}
\def\ZZ{{\mathbb Z}}
\def\QQ{{\mathbb Q}}
\def\PP{{\mathbb P}}
\def\trop{\operatorname{trop}}
\def\Ch{\operatorname{Ch}}
\def\Cl{\operatorname{Cl}}
\def\Mat{{\rm Mat}}
\def\Spec{{\rm Spec}}
\def\cone{{\rm cone}}
\begin{document}

\title[On Chow quotients of torus actions]%
{On Chow quotients of torus actions}

\author[H.~B\"aker]{Hendrik B\"aker}
\address{Mathematisches Institut,
Universit\"at T\"ubingen,
Auf der Morgenstelle 10,
72076 T\"ubingen,
Germany}
\email{hendrik.baeker@uni-tuebingen.de}

\author[J.~Hausen]{J\"urgen Hausen}
\address{Mathematisches Institut,
Universit\"at T\"ubingen,
Auf der Morgenstelle 10,
72076 T\"ubingen,
Germany}
\email{juergen.hausen@uni-tuebingen.de}

\author[S.~Keicher]{Simon Keicher}
\address{Mathematisches Institut, Universit\"at T\"ubingen,
Auf der Morgenstelle 10, 72076 T\"ubingen, Germany}
\email{keicher@mail.mathematik.uni-tuebingen.de}

\subjclass[2000]{14L24, 14L30, 14C20, 14C05}

\begin{abstract}
We consider torus actions on Mori dream spaces
and ask whether the associated Chow quotient 
is again a Mori dream space and, if so, what
does its Cox ring look like.  
We provide general tools for the study of these 
problems and give solutions for $\KK^*$-actions 
on smooth quadrics.
\end{abstract}

\maketitle

\section{Introduction}

Consider an action $G \times X \to X$ of a 
connected linear algebraic group $G$ on a 
projective variety $X$ defined over an 
algebraically closed field $\KK$ of characteristic 
zero.
The Chow quotient is an answer to the problem  
of associating in a canonical way a quotient 
to this action: it is defined as the 
closure of the set of general $G$-orbit closures 
viewed as points in the Chow variety, 
see Section~\ref{sec:chowquots} for more 
background.
The Chow quotient always exists but, in general,
its geometry appears to be not easily accessible.

The perhaps most prominent example is the 
Grothendieck-Knudsen moduli space $\overline{M}_{0,n}$
of stable $n$-pointed curves of genus zero.
Kapranov~\cite{Kap} showed that it arises as the Chow 
quotient of the maximal torus action on the Grassmannian
$G(2,n)$.
The space $\overline{M}_{0,n}$ is intensely studied and 
one of the challenging questions is whether or not 
$\overline{M}_{0,n}$ is a Mori dream space, i.e.~its 
Cox ring is finitely generated, see~\cite{HuKe}.
This is easily seen for $n \le 5$, was proven 
for $n=6$ by Castravet~\cite{Ca} and is open for
$n \ge 7$.
Motivated by this example, we formulate the following.

\begin{problem}
\label{prob:mdschowquot}
Consider the action $T \times X \to X$ 
of an algebraic torus $T$ on a Mori 
dream space $X$ and  the normalization $Y$
of the associated Chow quotient.
\begin{enumerate}
\item
Is $Y$ a Mori dream space?
\item
If $Y$ is a Mori dream space, describe its 
Cox ring.
\end{enumerate}
\end{problem}

The situation is well understood in the case of
subtorus actions on toric varieties~\cite{KaStZe,CrML}. 
There, the normalized Chow quotient is again 
toric and hence a Mori dream space. 
Moreover, the corresponding fan can be computed
and thus the Cox ring of the normalized Chow 
quotient is accessible as well.
Similarly, one may treat subtorus actions on 
rational varieties with a complexity one torus 
action using their recent combinatorial description.
In the present paper, we provide tools 
for a study of the general case. 
For example, our methods allow a complete answer 
to~\ref{prob:mdschowquot}~(i)
in the case of $\KK^*$-actions on smooth projective
quadrics:

\begin{theorem}
\label{mainthm1}
Let $\KK^*$ act on a smooth projective quadric $X$.
Then the associated normalized Chow quotient is 
a Mori dream space.
\end{theorem}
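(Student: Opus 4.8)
The plan is to reduce the problem to an explicit, combinatorially controlled torus quotient on the total coordinate space of $X$, where finite generation of the Cox ring can be read off directly.

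First I would put the action into normal form. Since $\Aut(X)$ is the projective orthogonal group $\mathrm{PO}$ of the defining quadratic form, which is reductive, every $\KK^*$-subgroup is conjugate to a diagonal one-parameter subgroup of a maximal torus. Hence, after a linear change of coordinates on the ambient $\PP^{n+1}$, I may assume that the quadratic form $q$ is a sum of hyperbolic pairs $x_i x_i'$ together with at most one square (according to the parity of $n$), and that $\KK^*$ acts diagonally with integral weights $w_i$ satisfying $w_i + w_{i'} = c$ on each hyperbolic pair and $2w_m = c$ on the square coordinate, so that $q$ is quasi-homogeneous of some weight $c$. This classifies the actions by a short list of weight data and exhibits the $\KK^*$-action as the restriction of a linear $\KK^*$-action on $\PP^{n+1}$ preserving $X$.

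Second, I would lift to the Cox ring. For $n \geq 3$ one has $\Cl(X) = \ZZ$ and Cox ring $\KK[x_0,\dots,x_{n+1}]/(q)$, so the total coordinate space is the affine quadric cone $\bar X = V(q) \subseteq \KK^{n+2}$, and $X$ is the quotient of $\bar X \setminus \{0\}$ by the grading torus $H = \KK^*$; for $n = 2$ one has $X \cong \PP^1 \times \PP^1$, which is toric and can be treated directly by the known results on subtorus Chow quotients of toric varieties. The diagonal action lifts to $\bar X$ and commutes with $H$, producing an action of a rank-two torus $Q = H \times \KK^*$ on the quadric cone $\bar X$. I would then invoke a combinatorial description of the normalized Chow quotient $Y$ in terms of this rank-two action, recording the bunch/fan data that encodes the chamber structure of linearizations.

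Third, I would extract $\Cl(Y)$ and the Cox ring of $Y$ from this picture. Because the acting group is only $\KK^*$, the relevant GIT chamber structure is the subdivision of a one-dimensional parameter interval into finitely many chambers, with walls at the weights of the Bialynicki–Birula fixed-point components; the Chow quotient is the common refinement, so only finitely many combinatorial types occur. I expect $Y$ to be an explicit $\QQ$-factorial projective variety — a controlled modification of the toric GIT quotients of $\bar X$ by $Q$ — whose Cox ring is presented by the single quadratic relation coming from $q$ together with a small number of additional generators forced by the wall-crossings, all of which I would write down from the normal-form weights.

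The main obstacle I anticipate lies in this last step. Finiteness of the chamber structure is automatic for a $\KK^*$-action, but it does \emph{not} by itself guarantee that $Y$ has a finitely generated Cox ring, since forming Chow quotients does not commute with the Cox construction in general — this is precisely what keeps the $\overline{M}_{0,n}$ question open. The decisive point will therefore be to show that, for the quadric, the rank-two torus action on $\bar X$ is of sufficiently low complexity that the Cox ring of $Y$ is finitely generated; I would establish this most cleanly by identifying $Y$ explicitly from the normal-form data and presenting its Cox ring by hand, so that finite generation holds by construction.
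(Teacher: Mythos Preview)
Your first three steps match the paper's setup exactly: normal form for the action, lift to the affine quadric cone with the rank-two torus $Q$, and identification of the normalized Chow quotient $Y$ as the closure of $(X\cap\TT^r)/\KK^*$ inside the toric GKZ quotient of $\PP_r$. The gap is in your final step.

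You propose to finish by writing down the Cox ring of $Y$ ``by hand'' from the weight data, presenting it by the quadratic relation together with a few extra generators coming from the wall-crossings. This is precisely what the paper carries out in Theorem~\ref{mainthm2} --- and it only works under an additional hypothesis, namely that at least four (resp.\ three) of the weights $\zeta_i$ have minimal absolute value. That hypothesis is what guarantees that the new variables $S_j$ attached to the extra GKZ rays define \emph{prime} elements in the candidate ring $R_2$, which is the crucial check in Corollary~\ref{cor:ambientblowup}. Without it, the candidate relation $g_2$ can become reducible after setting some $S_j=0$, primality fails, and you cannot conclude that $R_2$ is the Cox ring. The paper gives an explicit example (the quadric in $\PP_7$ with weights $(-3,3,-3,3,-2,2,-1,1)$) where only two weights are minimal and the direct presentation breaks down.

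What the paper does instead is not to compute $\mathcal{R}(Y)$ at all. It passes to a \emph{weak tropical resolution} $Y'\to Y$ (Construction~\ref{constr:tropresgen}), then divides out the intrinsic torus symmetry $T_{Y'}$ coming from the linear span of the Newton polytope of $g_1$ (Construction~\ref{constr:nullquot}). The resulting variety $Y'\quotnull T_{Y'}$ sits inside a toric variety obtained by subdividing the standard simplicial fan $\Delta(n)$ at a handful of new rays, and the combinatorial Lemma~\ref{lem:deltafan} forces all these new rays to lie in a single maximal cone of $\Delta(n)$. That is exactly the condition under which Lemma~\ref{lem:pnblowup} guarantees primality of all variables, so Corollary~\ref{cor:ambientblowup} now applies to the \emph{quotient} rather than to $Y$ itself. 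Finite generation is then pulled back to $Y$ via Propositions~\ref{prop:tropembmdschar} and~\ref{prop:mdscrit}. The tropical-resolution-plus-torus-quotient step is the missing idea in your outline; without it the primality verification you need does not go through for all weight configurations.
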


Note that a positive answer to the question~\ref{prob:mdschowquot}~(i) 
in the case of $\KK^*$-actions on arbitrary Mori dream 
spaces will imply a positive answer for all torus actions
on Mori dream spaces:
as we show in Theorem~\ref{thm:reduct}, the normalized 
Chow quotient of a torus action is birationally dominated 
by the space obtained via stepwise taking normalized 
Chow quotients by subtori and thus, if the latter space 
has finitely generated Cox ring, then the normalized 
Chow quotient does so.

\goodbreak

We turn to Problem~\ref{prob:mdschowquot}~(ii).
The motivation to describe the Cox ring is that 
this leads to a systematic approach to the geometry 
of the Chow quotient.
Let us present the results in the case of $\KK^*$-actions
on quadrics. After equivariantly embedding into a 
projective space and applying a suitable linear 
transformation, the smooth projective quadric 
$X$ is of the following shape:
$$
X 
\ = \
V(g_1)
\ \subseteq \ 
\PP_r,
\qquad
g_1  \ = \
\begin{cases}
T_0T_1 + \ldots + T_{r-1}T_r, & r \text{ odd,}
\\
T_0T_1 + \ldots + T_{r-2}T_{r-1} + T_r^2, & r \text{ even,}
\end{cases}
$$
where the $\KK^*$-action is diagonal with weights 
$\zeta_0, \ldots, \zeta_r$ and the defining equation
is of degree zero.
In order to write down the Cox ring of the Chow quotient,
consider the extended weight matrix
\begin{eqnarray*}
Q 
& := & 
\left[
\begin{array}{ccc}
\zeta_0 & \ldots & \zeta_r
\\
1 & \ldots & 1
\end{array}  
\right]
\end{eqnarray*}
where we assume that the columns of $Q$ generate
$\ZZ^2$.
Let $P$ be an integral Gale dual, 
i.e.~an $r-1$ by $r+1$ matrix 
with the row space of $Q$ as kernel. 
Determine the Gelfand-Kapranov-Zelevinsky 
decomposition~$\Sigma$ associated to $P$
and put the primitive generators 
$b_1,\ldots, b_l$ of $\Sigma$
differing from the columns of $P$ as columns into 
a matrix $B$. 
Then there is an integral matrix $A$ such that
$B = P \cdot A$ holds.
Define shifted row sums 
$$
\eta_i  :=   A_{i \, *} + A_{i+1 \, *} + \mu 
\quad \text{ for } i = 0,2, \ldots;
\qquad\quad
\eta_r := 2  A_{r \, *} + \mu, 
\quad\text{ if } r \text{ is even}.
$$
where $\mu$ is the componentwise minimal vector 
such that the entries of the $\eta_i$ are all
nonnegative. Then our result reads as follows.

\begin{theorem}
\label{mainthm2}
In the above setting, assume that any $r$ 
columns of $Q$ generate~$\ZZ^2$ and that 
for odd (even) $r$ there are at least four (three) 
weights $\zeta_i$ of minimal absolute value.
Then the normalized Chow quotient $Y$ 
of the $\KK^*$-action on $X$ has Cox ring 
\begin{eqnarray*}
\mathcal{R}(Y)
& = & 
\KK[T_0,\ldots,T_r,S_1,\ldots, S_l]
\ / \ 
\bangle{g_2}
\end{eqnarray*}
with 
\begin{eqnarray*}
g_2
& := &
\begin{cases} 
T_0T_1S^{\eta_0} 
+
T_2T_3S^{\eta_2} 
+ \ldots + 
T_{r-1}T_r S^{\eta_{r-1}},
&
r \text{ odd},
\\
T_0T_1S^{\eta_0} 
+  \ldots + 
T_{r-2}T_{r-1}S^{\eta_{r-2}} 
+
T_r^2 S^{\eta_{r}},
&
r \text{ even}
\end{cases}
\end{eqnarray*}
graded by $\ZZ^{l+2}$ via assigning to the $i$-th variable
the $i$-th column of a Gale dual of the block matrix
$[P,B]$.
\end{theorem}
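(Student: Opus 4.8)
The plan is to present $Y$ as a closed subvariety of a toric variety $Z$ built from the Gelfand--Kapranov--Zelevinsky data and then to read off its Cox ring from this embedding by means of the general results established above. First I would realize $X$ through the affine quadric cone $\overline{X} = V(g_1) \subseteq \KK^{r+1}$. On this cone the grading torus $H = \KK^*$, acting by scaling, and the lifted $\KK^*$-action, acting diagonally with the weights $\zeta_i$, combine to an action of $(\KK^*)^2$ whose weight matrix is exactly $Q$. Since $g_1$ is of degree zero, the weights satisfy $\zeta_{2i} + \zeta_{2i+1} = 0$ and $\zeta_r = 0$ for even $r$; this pairing is what organizes the monomials $T_{2i}T_{2i+1}$ (respectively $T_r^2$) of $g_1$. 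The hypothesis that any $r$ columns of $Q$ generate $\ZZ^2$ guarantees that the $(\KK^*)^2$-action is free in codimension one, which is what makes the general machinery applicable.

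Next I would invoke the description of the normalized Chow quotient obtained from our general results. The GKZ decomposition $\Sigma$ attached to the Gale dual $P$ is the fan of the normalized toric Chow quotient $Z$ of the ambient $(\KK^*)^2$-action on $\KK^{r+1}$, and the quadric's Chow quotient $Y$ appears as the proper transform of $X$ inside $Z$. Being toric, $Z$ has free divisor class group, and its Cox ring is the polynomial ring $\mathcal{R}(Z) = \KK[T_0, \ldots, T_r, S_1, \ldots, S_l]$, the extra variables $S_1, \ldots, S_l$ corresponding precisely to the rays $b_1, \ldots, b_l$ of $\Sigma$ that are not columns of $P$; the $\ZZ^{l+2}$-grading is the one given by a Gale dual of $[P,B]$, as in the statement.

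The heart of the matter is to identify the defining equation of $\overline{Y} = \Spec \mathcal{R}(Y)$ inside $\mathcal{R}(Z)$ as the proper transform of $g_1$. Homogeneity with respect to the $\Cl(Z)$-grading, that is the grading by a Gale dual of $[P,B]$, forces each monomial $T_{2i}T_{2i+1}$ (respectively $T_r^2$) of $g_1$ to be corrected by a monomial $S^{\eta_i}$ whose exponent is, up to a global shift, the shifted row sum $A_{2i\,*} + A_{2i+1\,*}$ (respectively $2A_{r\,*}$); geometrically, $A_{a\,j}$ is the order of vanishing of $T_a$ along the exceptional divisor corresponding to $b_j$. That this correction yields a homogeneous element is exactly the point where $B = P \cdot A$ and the degree-zero conditions $\zeta_{2i} + \zeta_{2i+1} = 0$ enter: the difference of any two monomial exponents of $g_1$ lies in the kernel of $Q$, hence in the row space of $P$, and applying the transpose of $A$ transports this into the row space of $B$, which is precisely the homogeneity condition. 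The componentwise minimal vector $\mu$ then normalizes the exponents so that each $S_j$ occurs with minimal nonnegative power, i.e.\ it passes from the total transform to the proper transform by dividing out the common $S$-factor, producing the stated $g_2$. I expect this step to be the main obstacle, since it requires careful valuation bookkeeping along every exceptional divisor and is where the combinatorics of $P$, $B$, $A$ and $\mu$ genuinely enters.

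Finally I would verify that $\mathcal{R}(Y) = \mathcal{R}(Z) / \langle g_2 \rangle$ really holds, by checking the hypotheses of the criterion for Cox rings of subvarieties of toric varieties: that $g_2$ is prime, that $\overline{Y}$ meets the vanishing loci of the coordinates in the expected codimensions, and that the restrictions of $T_0, \ldots, T_r, S_1, \ldots, S_l$ form a system of pairwise non-associated $\Cl(Y)$-prime generators of $\mathcal{R}(Y)$. This is where the assumption on the number of weights $\zeta_i$ of minimal absolute value enters: it forces the required codimension bounds on the zero loci of the $T_i$, thereby guaranteeing the irreducibility of $g_2$ and the normality of $V(g_2)$, which together complete the argument.
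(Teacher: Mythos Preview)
Your overall strategy matches the paper's: realize the Chow quotient as the proper transform $X_2$ of a GIT quotient $X_1 \subseteq Z_1$ under the toric modification $Z_2 \to Z_1$ coming from the GKZ fan, identify the transferred ideal $\langle g_2\rangle$ via the weak $B$-lifting machinery of Lemma~\ref{lem:befreiung}, and then invoke Corollary~\ref{cor:ambientblowup}. However, you misidentify both the main obstacle and the mechanism by which the minimal-weight hypothesis enters.

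The transferred-ideal computation you flag as ``the main obstacle'' is in fact routine once Lemma~\ref{lem:befreiung} is in place: homogeneity is automatic from Construction~\ref{constr:shiftideal}, and the shape of $g_2$ follows formally from the definition of $A$ and $\mu$. The genuine difficulty is showing that each $S_j$ defines a \emph{prime} element in $R_2 = \KK[T_0,\ldots,T_r,S_1,\ldots,S_l]/\langle g_2\rangle$, i.e.\ that $g_2$ stays irreducible after setting $S_j=0$; only then do Proposition~\ref{prop:K1primcrit} and Corollary~\ref{cor:ambientblowup} apply. This has nothing to do with ``codimension bounds on the zero loci of the $T_i$'' as you write. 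What the paper actually does is renumber so that the last four (three) variables $T_i$ carry weights of minimal absolute value, and then use the explicit description of the GKZ rays in Proposition~\ref{cor:gkzrays}: each new ray $b_j$ comes from a $Q$-hyperplane $u_j^\perp$, and the form $u_j$ can be chosen nonpositive on those last four (three) columns $q_i$. Setting $A'_{ij}=\max(0,u_j(q_i))$ produces a weak $B$-lifting whose last four (three) rows vanish, so that $g_2$ contains two monomials (namely $T_{r-3}T_{r-2}+T_{r-1}T_r$, respectively $T_{r-2}T_{r-1}+T_r^2$) free of every $S$-variable. This is precisely what forces $g_2\vert_{S_j=0}$ to remain irreducible for each $j$. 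Without the ray description of Proposition~\ref{cor:gkzrays} you have no control over the exponents $\eta_i$, and your plan as written gives no indication of how this step would be carried out.
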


The proof of Theorem~\ref{mainthm2}
is performed in Section~\ref{sec:quadrics1}.
Besides the explicit description of the 
rays of the Gelfand-Kapranov-Zelevinsky 
decomposition 
provided in Proposition~\ref{cor:gkzrays}, 
it requires controlling the 
behaviour of the Cox ring under certain 
modifications. 
This technique is of independent interest
and developed in full generality in 
Section~\ref{sec:ambmod1}.
The proof of Theorem~\ref{mainthm1}, given in 
Section~\ref{sec:quadrics2}, uses moreover
methods from tropical geometry: we consider
a ``weak tropical resolution'' of the Chow
quotient, see Construction~\ref{constr:tropresgen},
and provide a reduction principle to divide
out intrinsic torus symmetry,
see Proposition~\ref{prop:tropembmdschar}.
These methods are constructive and will
be used in~\cite{BaHaKe} for a computational 
study of Chow quotients, e.g.~of certain
Grasmannians.

\tableofcontents

\section{Chow quotients and limit quotients}
\label{sec:chowquots}

We present the necessary background and a 
general result for the action of a torus~$T$ 
on a projective (irreducible) variety~$X$.
For the precise definition of the quotients, 
consider more generally the action 
$G \times X \to X$ of any connected 
linear algebraic group~$G$ on a projective 
variety $X$. 
The Chow quotient has been introduced by Kapranov, 
Sturmfels and Zelevinsky in~\cite{KaStZe}. 
Initially, the construction appears to depend on an 
embedding but finally turns out not do so.

\begin{construction}
Suppose that $X$ is a $G$-invariant closed 
subvariety of some projective space.
For a suitable open invariant subset 
$U \subseteq X$, all orbit closures 
$c(x) := \overline{G \mal x}$,
where $x \in U$, have the same dimension 
$k$ and degree $d$.
Thus, each $x \in U$ defines a point 
$c(x) \in \Ch(X)$ in the Chow variety of 
$k$-cycles of degree $d$.
The {\em Chow quotient\/} of the 
$G$-action on $X$ is the closure 
$$
X \quotchow G 
\ := \ 
\overline{\{c(x); \ x \in U\}}
\ \subseteq \
\Ch(X).
$$ 
By the {\em normalized Chow quotient\/} 
we mean the normalization $X \nquotchow G$
of $X \quotchow G$.
With a suitably small chosen $U \subseteq X$,
one obtains a commutative diagram of morphisms 
involving the normalization map:
$$ 
\xymatrix{
& 
U 
\ar[dl]
\ar[dr]
&
\\
X \nquotchow G
\ar[rr]
&&
X \quotchow G.
}
$$
\end{construction}

The limit quotient arises from the variation 
of Mumford's GIT quotients~\cite{Mu}.
Its construction relies on finiteness of the 
number of possible sets of semistable 
points~\cite{DoHu,Thadd}.

\begin{construction}
Suppose that $G$ is reductive.
Let $X_1, \ldots, X_r \subseteq X$ be the 
open sets of semistable points arising from 
$G$-linearized ample line bundles on $X$.
Then, whenever $X_i \subseteq X_j$ holds, 
we have a commutative diagram
$$ 
\xymatrix{
X_i 
\ar[r]
\ar[d]
&
X_j
\ar[d]
\\
X_i \quot G
\ar[r]_{\varphi_{ij}}
&
X_j \quot G
}
$$
where the induced map 
$\varphi_{ij} \colon X_i  \quot G \to X_j  \quot G$ 
of quotients is a dominant projective morphism.
This turns the quotient spaces into a directed
system, the {\em GIT system}.
The associated {\em GIT limit} $Y$, 
i.e.~the inverse limit,
comes with a canonical morphism
$$ 
U 
\ := \ 
\bigcap_{i=1}^r X_i
\quad
\to 
\quad 
Y.
$$
The closure of the image of this morphism 
is denoted by $X \qquot G$ and is called 
the {\em limit quotient\/}. There are canonical 
proper birational morphisms onto the GIT 
quotients:
$$
\pi_i \colon X \qquot G \ \to \ X_i \quot G.
$$
The {\em normalized limit quotient\/} is the 
normalization $X \nqquot G$ of $X \qquot G$.
Suitably shrinking the open set $U \subseteq X$, 
we obtain a commutative diagram involving the 
normalization map:
$$ 
\xymatrix{
& 
U 
\ar[dl]
\ar[dr]
&
\\
X \nqquot G
\ar[rr]
&&
X \qquot G.
}
$$
\end{construction}

Note that, in the literature, $X \qquot G$ is called 
also the ``canonical component'' of the GIT limit,
or even shortly the ``GIT limit''.
Similar to the full inverse limit, the quotient
$X \qquot G$ enjoys a universal property.

\begin{remark}
\label{rem:gitlimunivprop}
Given an irreducible variety $W$ and 
a collection of dominant morphisms 
$\psi_i \colon W \to X_i \quot G$
with $\psi_j = \varphi_{ij} \circ \psi_i$ 
for all $i,j$, there is a unique morphism 
$\psi \colon W \to X \qquot G$ with 
$\psi_i = \pi_i \circ \psi$ for all $i$.
\end{remark}

For a general reductive group action, 
the (normalized) Chow quotient and the
(normalized) limit quotient need not 
coincide. For torus actions, however, 
they do. This statement seems to have 
folklore status; a proof under a certain 
hypothesis can be found in~\cite[Thm.~3.8]{Hu}.
Let us indicate how to deduce it from 
the corresponding statement in the case 
of subtorus actions on projective toric 
varieties obtained in~\cite{KaStZe, CrML}.

We recall the necessary results and concepts
from~\cite{KaStZe, CrML}.
Let $Z$ be a projective toric variety with 
acting torus $T_Z$ and consider the action
of a subtorus $T \subseteq T_Z$.
The toric variety~$Z$ arises from a 
fan $\Sigma$ in some $\ZZ^r$ and 
$T \subseteq T_Z$ corresponds to an
embedding $\ZZ^k \subseteq \ZZ^r$
of a sublattice.
Let $P \colon \ZZ^r \to \ZZ^{r-k}$ 
be the projection.
The {\em quotient fan\/} of $\Sigma$ 
with respect to $P$ is the  
fan in $\ZZ^{r-k}$ with the cones 
$$ 
\tau(v) 
\ := \ 
\bigcap_{\sigma \in \Sigma, v \in P(\sigma)} P(\sigma),
\qquad
v \in \QQ^{r-k}.
$$

\begin{proposition}
\label{prop:chow2GIT}
See~\cite{KaStZe, CrML}.
Consider the toric variety $Z$ arising from 
a fan~$\Sigma$ in~$\ZZ^r$ and the action of 
a subtorus $T \subseteq T_Z$ corresponding
to a sublattice $\ZZ^k \subseteq \ZZ^r$. 
Let~$\Sigma'$ be the quotient fan in $\ZZ^{r-k}$ 
with respect to $\ZZ^r \to \ZZ^{r-k}$ and 
$Z'$ the associated toric variety.
Then we have a commutative diagram
$$ 
\xymatrix@R=15pt{
&
T_Z/T
\ar[dl]
\ar[d]
\ar[dr]
&
\\
Z \nquotchow T
\ar@{<->}[r]_{\quad \cong}
\ar[d]
&
Z'
\ar@{<->}[r]_{\cong \quad}
&
Z \nqquot T
\ar[d]
\\
Z \quotchow
\ar@{<->}[rr]_{\cong \quad}
&
&
Z \qquot T
}
$$
In particular, the (normalized) Chow quotient 
and the (normalized) limit quotient of the 
$T$-action on $Z$ are isomorphic
to each other.
\end{proposition}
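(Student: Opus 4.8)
The plan is to prove the two isomorphisms in the upper row, $Z \nquotchow T \cong Z'$ and $Z' \cong Z \nqquot T$, separately, and then to read off the commutativity of the whole diagram together with the bottom isomorphism from these. The common reference object throughout is the big torus $T_Z/T \cong (\KK^*)^{r-k}$, which is the acting torus of $Z'$ and sits as a dense orbit in each of the five quotient spaces; all the maps out of it are induced by the projection $P \colon \ZZ^r \to \ZZ^{r-k}$, which restricts to $T_Z \to T_Z/T$ on the big tori.

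First I would treat the Chow side, invoking the theorem of Kapranov--Sturmfels--Zelevinsky. The general $T$-orbit closures in $Z$ form a flat family whose members are the toric subvarieties cut out by the fibres of $P$, and the closure of this family in the Chow variety is again a toric variety with acting torus $T_Z/T$. Its fan is governed by how the projected cones $P(\sigma)$ overlap as the orbit closures degenerate, and the combinatorics of these degenerations is encoded precisely by the common-refinement cones $\tau(v) = \bigcap_{\sigma, \, v \in P(\sigma)} P(\sigma)$. Normalizing yields $Z \nquotchow T \cong Z'$, with the rational quotient $Z \dashrightarrow Z'$ being the toric map induced by $P$. This supplies the left column and the left-hand arrow out of $T_Z/T$.

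Next I would treat the limit side via variation of GIT. For the subtorus action the semistable loci $Z_i$ arising from $T$-linearized ample bundles are again toric and are parametrized by the chambers of the Gelfand--Kapranov--Zelevinsky decomposition in $\QQ^{r-k}$; each GIT quotient $Z_i \quot T$ is the toric variety of a fan $\Sigma_i$ read off from $P$ and the chosen chamber, and the $\varphi_{ij}$ are the toric morphisms coarsening one such fan to another. The key combinatorial observation is that the quotient fan $\Sigma'$ refines every $\Sigma_i$; indeed one checks that $\Sigma'$ is exactly the common refinement of the GIT fans. Hence there are compatible dominant toric morphisms $Z' \to Z_i \quot T$, and the universal property of Remark~\ref{rem:gitlimunivprop} produces a morphism $Z' \to Z \qquot T$. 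Since this morphism is the identity on $T_Z/T$ and proper birational, it is the normalization map, giving $Z' \cong Z \nqquot T$ and the right column with its arrow out of $T_Z/T$. Finally, the top triangles commute because all three top spaces share $T_Z/T$ as their common open torus with the evident maps, and both bottom objects $Z \quotchow T$ and $Z \qquot T$ are the image-closure of the \emph{same} dominant map from (an open subset of) $T_Z/T$, carrying the same normalization $Z'$; an irreducible variety is determined up to isomorphism by its normalization together with such a fixed dominant map from its big torus, whence $Z \quotchow T \cong Z \qquot T$ and the vertical arrows commute.

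The hard part, and the step on which everything hinges, is the combinatorial identification in the limit-side argument: describing the GIT fans $\Sigma_i$ correctly in terms of $P$ and the GKZ chambers, and verifying that their common refinement is \emph{literally} the fan of the $\tau(v)$. The conceptual statement that an inverse limit of toric morphisms corresponds to the common refinement of their fans is clean, but matching that common refinement with both the KSZ description of the Chow fan and the definition of the quotient fan is where the genuine work lies. Once both the Chow fan and the limit fan are seen to equal $\Sigma'$, the remaining identifications and the commutativity of the diagram are formal.
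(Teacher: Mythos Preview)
The paper does not actually prove this proposition: it is quoted from the references \cite{KaStZe,CrML}, as the ``See~\cite{KaStZe,CrML}'' indicates. So there is no in-paper argument to compare against; your outline is essentially a reconstruction of what those references do, and on the Chow side and the limit side it is broadly correct.

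There is, however, a genuine gap in your last paragraph. The sentence ``an irreducible variety is determined up to isomorphism by its normalization together with such a fixed dominant map from its big torus'' is false, and so your deduction of the bottom isomorphism $Z \quotchow T \cong Z \qquot T$ does not go through. A nodal and a cuspidal plane cubic both contain $\KK^*$ as a dense open orbit and both have normalization $\PP^1$, yet they are not isomorphic. Knowing that two (possibly non-normal) varieties share a common dense torus and a common normalization does \emph{not} pin down the variety. What is actually used to obtain the bottom isomorphism is the existence of a canonical \emph{morphism} $Z \quotchow T \to Z \qquot T$ (the Chow quotient dominates every GIT quotient compatibly, hence maps to the inverse limit; see \cite{Kap,Thadd}), and one then argues with this morphism in hand. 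Note also that the paper explicitly remarks, just after Proposition~\ref{thm:chow2GIT}, that only the normalized part of the diagram is needed for the sequel.

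A smaller point: in your limit-side argument you write ``Since this morphism is the identity on $T_Z/T$ and proper birational, it is the normalization map.'' Proper birational from a normal source is not enough; blow-ups satisfy this too. The clean way is to observe that $Z \nqquot T$ is itself a normal toric variety with acting torus $T_Z/T$, hence given by some fan $\Sigma''$; the projections to the $Z_i\quot T$ force $\Sigma''$ to refine every $\Sigma_i$, hence to refine their common refinement $\Sigma'$, while your morphism $Z' \to Z \nqquot T$ shows $\Sigma'$ refines $\Sigma''$. Thus $\Sigma'=\Sigma''$ and the map is an isomorphism.
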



We turn to the general case. The result is 
formulated for a projective variety~$X$ 
which is equivariantly embedded into a toric variety 
$Z$. Note that for a normal projective~$X$,
this can always be achieved, even with a 
projective space $Z$.

\begin{proposition}
\label{thm:chow2GIT}
Let $Z$ be a projective toric variety, 
$T \subseteq T_Z$ a subtorus of the big torus 
and $X \subseteq Z$ a closed $T$-invariant 
subvariety intersecting $T_Z$.
Then there is a commutative diagram
$$
\xymatrix@R=10pt{
&
{X \quotchow T}
\ar@{<->}@/_10pc/[dddd]_{\cong}
\ar[rr]^{\rm embedding}
& &
{Z \quotchow T}
\ar@{<->}@/^8pc/[dddd]^{\cong}
\\
&
{X \nquotchow T}
\ar[rr]^{\rm finite}
\ar@{=}[dd]
\ar[u]
& &
{Z \nquotchow T}
\ar@{=}[dd]
\ar[u]
\\
(X \cap T_Z)/T 
\ar[ur]
\ar[dr]
\ar[uur]
\ar[ddr]
\ar@{-}[r]
&
\ar@{-}[rr]
&
&
\ar[r]
&
T_Z/T 
\ar[ul]
\ar[dl]
\ar[uul]
\ar[ddl]
\\
&
{X \nqquot T}
\ar[rr]_{\rm finite}
\ar[d]
& &
{Z \nqquot T}
\ar[d]
\\
&
{X \qquot T}
\ar[rr]_{\rm embedding}
& &
{Z \qquot T}
}
$$
where $X \nquotchow T \to Z \nquotchow T$
and $X \nqquot T \to Z \nqquot T$
normalize the closures of the images of 
$(X \cap T_Z)/T$ 
under the canonical open embeddings of $T_Z/T$.
\end{proposition}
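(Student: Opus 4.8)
The plan is to deduce Proposition~\ref{thm:chow2GIT} from the already-established toric case, Proposition~\ref{prop:chow2GIT}, by functoriality. The whole diagram should be read as the restriction to $X$ of the corresponding diagram for the ambient toric variety $Z$. Concretely, the inclusion $X \hookrightarrow Z$ is $T$-equivariant, so a general $T$-orbit closure in $X$ is the intersection with $X$ of a general $T$-orbit closure in $Z$; passing to Chow varieties this should give a morphism $X \quotchow T \to Z \quotchow T$, and the first task is to verify that this map is a closed embedding. The key point is that the assignment sending an orbit closure in $X$ to the corresponding orbit closure in $Z$ is injective on the dense set of general orbits (one recovers the $X$-cycle from the $Z$-cycle by intersecting with $X$), and the Chow variety construction is compatible with the closed embedding $\Ch(X) \hookrightarrow \Ch(Z)$.

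\medskip

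Next I would handle the limit-quotient side in parallel. Since $X$ meets the big torus $T_Z$, the semistable loci on $X$ arising from $T$-linearized ample bundles are the intersections with $X$ of the corresponding semistable loci on $Z$, and the GIT quotients $X_i \quot T$ map compatibly into $Z_i \quot T$. Using the universal property of the limit quotient recorded in Remark~\ref{rem:gitlimunivprop}, the compatible family of morphisms $(X \cap T_Z)/T \to X_i \quot T \to Z_i \quot T$ induces a canonical morphism $X \qquot T \to Z \qquot T$, and the same argument as before shows it is an embedding. Both constructions restrict the ambient quotient along the locally closed subset $(X \cap T_Z)/T \subseteq T_Z/T$, and this common open dense source is what glues the Chow and limit sides of the diagram together.

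\medskip

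The identification of the two quotients of $X$ with each other then follows by transport of structure: on $Z$ we already know from Proposition~\ref{prop:chow2GIT} that $Z \quotchow T \cong Z \qquot T$ (and likewise after normalization), both being canonically identified with the toric variety $Z'$ of the quotient fan. Since $X \quotchow T$ and $X \qquot T$ are by construction the closures in $Z \quotchow T$ and $Z \qquot T$ of the \emph{same} image of $(X \cap T_Z)/T$, the toric isomorphism carries one closure onto the other, yielding the curved isomorphisms on the outer edges. The normalization maps $X \nquotchow T \to X \quotchow T$ and $X \nqquot T \to X \qquot T$ are then determined by universality, and the horizontal maps $X \nquotchow T \to Z \nquotchow T$ and $X \nqquot T \to Z \nqquot T$ are finite because they normalize the closure of the image of a locally closed subset inside a normal variety, hence factor as normalization followed by a finite closed embedding.

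\medskip

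The step I expect to be the main obstacle is making the passage from $X$-orbit cycles to $Z$-orbit cycles genuinely functorial at the level of Chow varieties, i.e.\ checking that the family over a shrunk $U \subseteq X$ really is induced by the family over $Z$ and that taking closures commutes with the embedding $\Ch(X) \hookrightarrow \Ch(Z)$. One must be careful that a general $T$-orbit in $X$ is still general as an orbit in $Z$ (which uses that $X$ meets $T_Z$, so that $X \cap T_Z$ is dense in $X$ and its orbits are the generic ones), and that the degrees and dimensions stabilize compatibly so that a single component of the Chow variety receives both families. Once this compatibility is in place, every other arrow in the diagram is obtained by restriction or by a universal property, and commutativity is checked on the dense common open subset $(X \cap T_Z)/T$, where all the maps agree tautologically.
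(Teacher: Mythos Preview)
Your argument on the Chow side and the construction of the morphism $X \qquot T \to Z \qquot T$ via the universal property are both fine and match the paper. The gap is the sentence ``the same argument as before shows it is an embedding.'' There is no analogue of $\Ch(X)\hookrightarrow\Ch(Z)$ on the limit side: the limit quotient $X \qquot T$ is built from the \emph{full} GIT system of $X$, whereas the bundles restricted from $Z$ give only a subsystem, and the universal property of Remark~\ref{rem:gitlimunivprop} yields nothing more than a morphism $X \qquot T \to Z \qquot T$ which is a priori only proper and birational onto the closure of $(X\cap T_Z)/T$. Your ``transport of structure'' step then begs the question: you identify $X \qquot T$ with that closure precisely by assuming the map is an embedding.

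The paper closes this gap with an extra ingredient you do not invoke: the canonical Kapranov--Thaddeus morphism $X \quotchow T \to X \qquot T$, which exists for any projective $G$-variety. Inserting it into the square
\[
\xymatrix{
X \quotchow T \ar[r]^-{\text{emb.}} \ar[d] & Z \quotchow T \ar[d]^{\cong}\\
X \qquot T \ar[r] & Z \qquot T
}
\]
one reads off simultaneously that $X \quotchow T \to X \qquot T$ is an isomorphism and that $X \qquot T \to Z \qquot T$ is an embedding, since the composite along the top and right is the known embedding. Without this map (or an independent proof that the pulled-back GIT subsystem already computes $X \qquot T$, which is in fact a \emph{corollary} of the proposition, not an input), your third paragraph does not go through.
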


\begin{proof}
The right part of the diagram is 
Proposition~\ref{prop:chow2GIT}.
The closed embedding 
$X \quotchow T \to Z \quotchow T$
exists by the construction of 
the Chow quotient; 
compare also~\cite[Thm.~3.2]{GiML}.

To obtain a morphism $X \qquot T \to Z \qquot T$, 
consider the sets of semistable points
$V_1, \ldots, V_s \subseteq  Z \nquotchow T$
defined by $T$-linearized ample line bundles
on $Z$.
Then the sets $U_i := X \cap V_i$ are
sets of semistable points of the respective 
pullback bundles, see~\cite[Thm.~1.19]{Mu}
and we have induced morphisms $U_i \quot T \to V_i \quot T$.
Since the $U_i \quot T$ form a subsystem 
of the full GIT-system of $X$,
the universal property~\ref{rem:gitlimunivprop}
yields a morphism of the limit quotients
sending $X \qquot T$ birationally onto the 
closure of $(X \cap T_Z)/T$.

Now look at the canonical morphism 
$X \quotchow T \to X \qquot T$ 
provided by~\cite{Kap,Thadd}. It fits 
into the diagram established so far
which in turn implies that 
$X \quotchow T \to X \qquot T$ is an
isomorphism and $X \qquot T \to Z \qquot T$
is an embedding.
Finally, the respective normalizations 
fit into the diagram via their 
universal properties.
\end{proof}

Note that we will only use the part of 
Proposition~\ref{thm:chow2GIT} concerning 
the normalizations. This can be proved by 
similar arguments as above but without using 
the isomorphism $Z \quotchow T \to Z \qquot T$ 
of Proposition~\ref{prop:chow2GIT}.

\begin{corollary}
Let $T \times X \to X$ be the action of a torus $T$
on a normal projective variety $X$.
Then the normalized Chow quotient $X \nquotchow T$ and 
the normalized limit quotient $X \nqquot T$ are isomorphic
to each other.
\end{corollary}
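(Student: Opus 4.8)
The plan is to reduce the general situation to the embedded toric setting treated in Proposition~\ref{thm:chow2GIT} and then read off the asserted isomorphism from its diagram. Thus the whole argument amounts to producing, for an arbitrary torus action on a normal projective variety, an equivariant embedding into a projective toric variety that satisfies the hypotheses of that proposition; the substantial geometric content has already been absorbed into the proposition itself, in particular via the remark that only its statement about the normalizations is needed.

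First I would realize $X$ inside a toric variety. Since $X$ is normal and projective and carries a $T$-action, Sumihiro's equivariant embedding theorem provides a $T$-linearized very ample line bundle, hence a $T$-equivariant closed embedding $X \hookrightarrow \PP(V)$ into the projectivization of a finite-dimensional $T$-representation $V$. Decomposing $V$ into weight spaces diagonalizes the $T$-action, so $Z := \PP(V)$ is a projective toric variety and $T$ maps into its big torus $T_Z$. Replacing $T$ by its image if necessary (a subtorus acting trivially changes neither the orbits nor therefore the two quotients), we may assume $T \subseteq T_Z$ is a genuine subtorus.

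The remaining hypothesis of Proposition~\ref{thm:chow2GIT} is that $X$ meets $T_Z$, and here I would argue by irreducibility. If $X$ were contained in the toric boundary $Z \setminus T_Z = \bigcup_i \{x_i = 0\}$, then, being irreducible, it would lie in a single coordinate hyperplane $\{x_i = 0\}$, i.e. the weight vector $x_i \in V^*$ would vanish identically on $X$. Its kernel is a $T$-subrepresentation, so we may pass to the corresponding coordinate subspace and repeat. Since the dimension strictly drops at each step, after finitely many reductions $X$ lies in no coordinate hyperplane, which is exactly $X \cap T_Z \neq \emptyset$.

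With all hypotheses in force, Proposition~\ref{thm:chow2GIT} applies, and its diagram identifies $X \nquotchow T$ with $X \nqquot T$ directly; equivalently, the left-hand isomorphism $X \quotchow T \cong X \qquot T$ together with functoriality of normalization yields the claim, using that both the Chow and the limit quotient are intrinsic and independent of the chosen embedding. This gives the desired isomorphism. I do not expect a serious obstacle beyond bookkeeping: the only point demanding care is verifying that the constructed embedding genuinely satisfies the two conditions $T \subseteq T_Z$ and $X \cap T_Z \neq \emptyset$, since these are precisely what make the powerful Proposition~\ref{thm:chow2GIT} available.
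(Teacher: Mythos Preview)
Your proposal is correct and follows exactly the route the paper intends: the corollary is stated without proof because the paper has already remarked, just before Proposition~\ref{thm:chow2GIT}, that a normal projective $X$ with $T$-action can always be equivariantly embedded into a projective space, after which the assertion is read off from the diagram of that proposition. You simply spell out this embedding step (Sumihiro, diagonalizing the representation, and the irreducibility argument forcing $X \cap T_Z \ne \emptyset$), which is precisely the content the paper leaves implicit.
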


%

The following corollary shows that for torus actions, the limit 
quotient is up to normalization already determined by 
the possible linearizations of a single ample bundle;
a statement which fails in general for other reductive groups, 
compare also~\cite[Remark~0.4.10]{Kap}.

\begin{corollary}
Let $T \times X \to X$ be the action of a torus $T$
on a normal projective variety $X$. 
Then the subsystem of GIT quotients arising from the 
possible $T$-linearizations of a given ample line bundle
$\mathcal{L}$ has 
the same normalized limit quotient as the full system 
of GIT quotients.
\end{corollary}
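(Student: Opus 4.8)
The plan is to deduce this corollary from the preceding one together with the normalization-only part of Proposition~\ref{thm:chow2GIT}. The key observation is that the normalized limit quotient has just been identified with the normalized \emph{Chow} quotient, and the Chow quotient is manifestly independent of any choice of linearization: it is built from general orbit closures sitting inside the Chow variety, with no reference to ample bundles at all. So the strategy is to show that the subsystem of GIT quotients coming from a single ample bundle $\mathcal{L}$ already recovers this same invariant object, and hence its limit must agree with that of the full system.

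First I would fix an equivariant embedding $X \subseteq Z$ into a projective toric variety $Z$, as in Proposition~\ref{thm:chow2GIT}, and recall from that proposition that the normalized limit quotient $X \nqquot T$ is the normalization of the closure of $(X \cap T_Z)/T$ inside $Z \nqquot T \cong Z'$, the toric variety of the quotient fan. The toric picture is the engine here: the GIT quotients of the $T$-action on $Z$ arising from $T$-linearizations correspond to the GKZ-type chambers in the parameter space, and the whole quotient fan $\Sigma'$ is already recovered by varying the linearization of one chosen ample class. Concretely, translating an ample class by the characters of $T$ sweeps out a full chamber structure whose common refinement is $\Sigma'$, so the subsystem indexed by a single bundle $\mathcal{L}$ and the full system produce the same GIT limit on the toric side.

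Next I would pull this back to $X$. Writing $\mathcal{L}_1, \ldots, \mathcal{L}_s$ for the linearizations of $\mathcal{L}$ and $U_i := X \cap V_i$ for the intersections with the corresponding semistable loci $V_i \subseteq Z$, the argument in the proof of Proposition~\ref{thm:chow2GIT} shows each $U_i$ is a semistable locus for the pullback bundle, giving morphisms $U_i \quot T \to V_i \quot T$ compatible with the directed structure. The universal property in Remark~\ref{rem:gitlimunivprop} then produces a morphism from the limit of this $\mathcal{L}$-subsystem onto the closure of $(X \cap T_Z)/T$. The point is that the target is exactly the space whose normalization is $X \nqquot T$, and it does not depend on whether we used the subsystem or the full system; both factor through the same toric limit $Z \nqquot T$ and both dominate the same canonical component birationally. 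Passing to normalizations, both systems therefore yield $X \nqquot T$.

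The main obstacle I anticipate is the claim that a single ample bundle already sees the entire chamber structure: one must verify that translating one linearization by all characters of $T$ genuinely fills out the relevant part of the GKZ decomposition, rather than only a subcone of it. On the toric side this follows from the description of the quotient fan $\Sigma'$ in Proposition~\ref{prop:chow2GIT}, since the cones $\tau(v)$ are determined by projecting the cones of $\Sigma$ and are recovered from the ample cone of $Z$ under character translation; but for the embedded $X$ one has to ensure that restricting to $X$ does not collapse distinct chambers in a way that would make the $\mathcal{L}$-subsystem lose information present in the full system. This is controlled by the fact, already used above, that the \emph{normalized} limit quotient depends only on the closure of $(X \cap T_Z)/T$, an object intrinsic to the embedding and independent of linearizations, so any two cofinal subsystems dominating it share the same normalized limit. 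I would close by noting that this is precisely where torus actions behave better than general reductive groups, explaining the contrast flagged in the statement and in~\cite[Remark~0.4.10]{Kap}.
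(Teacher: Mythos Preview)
Your overall strategy---reduce to the toric ambient and invoke Proposition~\ref{thm:chow2GIT}---is the same as the paper's, but you miss the one move that makes the argument work cleanly, and without it there is a genuine gap. You fix an \emph{arbitrary} equivariant embedding $X \subseteq Z$ and then write ``$U_i := X \cap V_i$ for the intersections with the corresponding semistable loci $V_i \subseteq Z$''. But there is no reason the semistable loci of the various $T$-linearizations of $\mathcal{L}$ on $X$ should arise as intersections with semistable loci on $Z$: that requires $\mathcal{L}$ to be the restriction of an ample bundle on $Z$, which you never arranged. Your subsequent discussion of GKZ chambers on $Z$ is then disconnected from the $\mathcal{L}$-subsystem on $X$.

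The paper closes this gap by choosing the embedding \emph{via $\mathcal{L}$ itself}: take a suitable power of $\mathcal{L}$ with a fixed $T$-linearization to embed $X \hookrightarrow \PP_r$ equivariantly. Then (a power of) $\mathcal{L}$ is the restriction of $\mathcal{O}_{\PP_r}(1)$, so the linearizations of $\mathcal{L}$ on $X$ are exactly those pulled back from $\PP_r$. Since $\Pic(\PP_r) = \ZZ$, the \emph{full} GIT system on $\PP_r$ already comes from linearizations of $\mathcal{O}(1)$, and hence the $\mathcal{L}$-subsystem on $X$ coincides with the system induced from the full GIT system on $\PP_r$. Now Proposition~\ref{thm:chow2GIT} applies directly, with no need for your GKZ chamber-sweeping argument or the ``main obstacle'' paragraph. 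Once you make this choice of $Z = \PP_r$, the rest of your outline collapses to the paper's two-line proof.
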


\begin{proof}
Fix a $T$-linearization of $\mathcal{L}$ and consider 
the $T$-equivariant embedding $X \to \PP_r$ defined 
by the a suitable power of  $\mathcal{L}$.
Then the subsystem of the GIT quotients on $X$ 
arising from other linearizations of $\mathcal{L}$
is induced from the full GIT system on $\PP_r$.
Now apply Proposition~\ref{thm:chow2GIT}.
\end{proof}

We now prove the reduction theorem. It says 
in particular, that the Chow quotient 
of a torus action is birationally dominated 
by an iterated Chow quotient with respect to 
$\KK^*$-actions.

\begin{theorem}
\label{thm:reduct}
Let $T \times X \to X$ be the action of a torus $T$
on a normal projective variety~$X$.
Fix a subtorus $T_0 \subseteq T$ and set 
$T_1 := T/T_0$.
Then we have canonical proper birational morphisms 
$$ 
(X \nquotchow T_0) \nquotchow T_1
\ \to \ 
X \nquotchow T,
\qquad\qquad
(X \nqquot T_0) \nqquot T_1
\ \to \ 
X \nqquot T.
$$
\end{theorem}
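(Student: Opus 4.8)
The plan is to exploit the universal property of the limit quotient (Remark~\ref{rem:gitlimunivprop}) together with the established equivalence of Chow and limit quotients for torus actions. Since the normalized Chow quotient and the normalized limit quotient agree (Corollary above), I would carry out the entire argument on the limit-quotient side, where the inverse-limit structure gives direct access to morphisms, and then transport the conclusion back to the Chow picture. So the first step is to reduce the two claimed morphisms to the single statement $(X \nqquot T_0) \nqquot T_1 \to X \nqquot T$, the Chow-quotient version following formally.

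The heart of the matter is to produce a canonical morphism. Here I would proceed as follows. The torus $T$ acts on $X$, and the subtorus $T_0 \subseteq T$ with quotient $T_1 = T/T_0$ organizes the $T$-linearizations: any $T$-linearized ample bundle is in particular $T_0$-linearized, so its $T$-semistable locus is contained in the corresponding $T_0$-semistable locus, and the $T$-GIT quotient factors through the $T_0$-GIT quotient as a $T_1$-quotient. Concretely, for a $T$-linearized ample $\mathcal{L}$ one has $X_{\mathcal{L}}^{ss}(T) \subseteq X_{\mathcal{L}}^{ss}(T_0)$, the residual $T_1$-action descends to $X_{\mathcal{L}}^{ss}(T_0)\quot T_0$, and $X_{\mathcal{L}}^{ss}(T)\quot T = \bigl(X_{\mathcal{L}}^{ss}(T_0)\quot T_0\bigr)\quot T_1$. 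Thus each GIT quotient of the $T$-action is realized as a GIT quotient of the residual $T_1$-action on the $T_0$-quotient $X \nqquot T_0$. This exhibits the full $T$-GIT system as a subsystem of the $T_1$-GIT system on $X \nqquot T_0$, so applying the universal property~\ref{rem:gitlimunivprop} to the variety $W := (X \nqquot T_0)\nqquot T_1$ and the natural collection of dominant morphisms to the $T$-GIT quotients yields the desired map $(X \nqquot T_0)\nqquot T_1 \to X \nqquot T$.

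It remains to check that this morphism is proper and birational. Birationality is the cleaner point: on the common open set $U \subseteq X$ on which all relevant quotient maps are defined, a general $T$-orbit is the image of its $T_0$-orbit under the residual $T_1$-action, so the two constructions agree generically and the induced map on closures of images of $(X \cap T_Z)/T$ is birational. For properness I would invoke that both spaces are the closures of images landing in inverse limits of projective GIT quotients; the morphism is a morphism of normal projective varieties (both normalized limit quotients are projective), and a morphism between projective varieties is automatically proper, so properness is immediate once the target is known to be separated and of finite type, which it is by construction.

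The step I expect to be the main obstacle is verifying the iterated-quotient identity $X_{\mathcal{L}}^{ss}(T)\quot T = \bigl(X_{\mathcal{L}}^{ss}(T_0)\quot T_0\bigr)\quot T_1$ at the level of \emph{semistable loci} uniformly across all linearizations, so that one genuinely obtains a subsystem and not merely a cofinal family of compatible maps. The subtlety is that semistability for $T$ need not coincide with semistability for the residual $T_1$-action on the $T_0$-quotient unless one tracks the induced linearization carefully; in particular one must check that the $T_1$-linearizations arising on $X \nqquot T_0$ from $T$-linearizations on $X$ range over enough of the $T_1$-GIT system for the universal property to apply and to ensure birational \emph{dominance} onto $X \nqquot T$. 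Once this compatibility of semistable loci and linearizations is pinned down, the universal property does the rest and the Chow-quotient statement follows by the isomorphism of normalized Chow and limit quotients.
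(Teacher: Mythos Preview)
Your strategy via the universal property is reasonable, but the sentence ``Thus each GIT quotient of the $T$-action is realized as a GIT quotient of the residual $T_1$-action on the $T_0$-quotient $X \nqquot T_0$'' does not follow from what precedes it. What you have shown is that $X_{\mathcal{L}}^{ss}(T)\quot T$ is a $T_1$-GIT quotient of the \emph{particular} $T_0$-quotient $X_{\mathcal{L}}^{ss}(T_0)\quot T_0$, not of the limit $X \nqquot T_0$. To place it inside the $T_1$-GIT system of $X \nqquot T_0$ you would have to pull the relevant $T_1$-linearized ample bundle back along the proper birational projection $\pi_{\mathcal{L}} \colon X \nqquot T_0 \to X_{\mathcal{L}}^{ss}(T_0)\quot T_0$; but this pullback is only semiample, and the comparison of semistable loci under pullback that you implicitly need (Mumford's Theorem~1.19) applies to affine---in particular finite---morphisms, not to arbitrary proper birational ones. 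So you do not obtain a genuine subsystem of the $T_1$-GIT system on $X \nqquot T_0$, and Remark~\ref{rem:gitlimunivprop} cannot be invoked as written. Your final paragraph correctly flags this as the crux, but it is a real gap rather than a routine verification.

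The paper resolves this by a different route: it first settles the toric case by observing that the iterated quotient fan refines the one-step quotient fan, and then, for general $X$, chooses a $T$-equivariant embedding into a projective toric variety~$Z$ as in Proposition~\ref{thm:chow2GIT}. The induced map $\nu \colon X \nquotchow T_0 \to Z \nquotchow T_0$ is \emph{finite}, so pulling back $T_1$-linearized ample bundles along $\nu$ does give ample bundles on $X \nquotchow T_0$ with semistable loci $\nu^{-1}(V_i)$, and these form an honest subsystem of the $T_1$-GIT system there. This yields a morphism $(X \nquotchow T_0) \nqquot T_1 \to (Z \nquotchow T_0) \nqquot T_1$; composing with the toric-case morphism to $Z \nquotchow T$ and invoking Proposition~\ref{thm:chow2GIT} once more then produces the desired map to $X \nquotchow T$. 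The finiteness of $\nu$ is exactly the ingredient missing from your direct approach.
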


\begin{proof}
First consider the case that $T$ is a 
subtorus of the big torus $T_Z$ of a toric 
variety $Z$.
Then the maps $T_Z \to T_Z/T_0 \to T_Z/T$
correspond to lattice homomorphisms 
$\ZZ^r \to \ZZ^{r-k_0}  \to \ZZ^{r-k}$.
The fan $\Sigma$ of $Z$ lives in $\ZZ^r$
and we have the quotient fan $\Sigma_0$ 
of $\Sigma$ with respect to 
$\ZZ^r \to \ZZ^{r-k_0}$.
The quotient fan of $\Sigma_0$ 
with respect to 
$\ZZ^{r-k_0} \to \ZZ^{r-k}$ 
refines the quotient fan of $\Sigma$ 
with respect to $\ZZ^r \to \ZZ^{r-k}$.
Translated to toric varieties, this
means that we have the desired maps
$$
(Z \nquotchow T_0) \nquotchow T_1
\ \to \ 
Z \nquotchow T,
\qquad\qquad
(Z \nqquot T_0) \nqquot T_1
\ \to \ 
Z \nqquot T.
$$

We turn to the general case.
Suitably embedding $X$, we can arrange
the setup of Proposition~\ref{thm:chow2GIT}.
Then we have a finite $T_1$-equivariant map 
$\nu \colon X \nquotchow T_0 \to Z \nquotchow T_0$.
We consider the normalized limit quotient
of the $T_1$-action on $X \nquotchow T_0$.
In a first step, we establish a commutative 
diagram
$$ 
\xymatrix{
&
(X \cap T_Z)/T_0
\ar[dl]
\ar[dr]
&
\\
(X \nquotchow T_0) \qquot T_1 
\ar[rr]
&
&
(Z \nquotchow T_0) \qquot T_1
}
$$
For this, let 
$V_1, \ldots, V_s \subseteq  Z \nquotchow T_0$
be the sets of semistable points arising from
$T_1$-linearized ample line bundles.
Then the inverse images 
$\nu^{-1}(V_i) \subseteq X \nquotchow T_0$
are sets of semistable points of the respective 
pullback bundles, see~\cite[Thm.~1.19]{Mu}.
Note that we have canonical induced maps
$$
\nu^{-1}(V_i) \quot T_1 
\ \to \ 
V_i \quot T_1.
$$ 
Consequently, the limit quotient of the system 
of the quotients $\nu^{-1}(V_i) \quot T_1$ 
maps to the limit quotient $(Z \nquotchow T_0) \qquot T_1$.
Since the $\nu^{-1}(V_i) \quot T_1$ form 
a subsystem of the full GIT system of  
$X \nquotchow T_0$, this gives rise to 
a morphism 
$$
(X \nquotchow T_0) \qquot T_1 
\ \to \ 
(Z \nquotchow T_0) \qquot T_1
$$
as needed for the above commutative diagram.
As in the proof of Proposition~\ref{thm:chow2GIT}, 
we may pass to the normalizations and thus obtain a 
morphism
$$
(X \nquotchow T_0) \nquotchow  T_1
\ \to \ 
(Z \nquotchow T_0) \nquotchow  T_1.
$$ 
Now, by the toric case, we have a proper
birational morphism from the toric 
variety on the right hand side onto 
$Z \nquotchow T$.
Using once more Proposition~\ref{thm:chow2GIT},
the assertion follows.
\end{proof}

\section{Toric ambient modifications}
\label{sec:ambmod1}

In this section, we provide a general machinery 
to study the effect of modifications on the Cox 
ring.
Similar to~\cite{HaMMJ}, we use toric embeddings.
In contrast to the geometric criteria given there,
our approach here is purely algebraic, based on 
results of~\cite{Bech}.
The heart is a construction of factorially graded 
rings out of given ones. 

We begin with recalling the necessary algebraic
concepts.
Let $K$ be a finitely generated abelian group 
and $R$ a finitely generated integral $K$-graded 
$\KK$-algebra.
A homogeneous nonzero nonunit $f \in R$ is 
called {\em $K$-prime\/} if $f \mid gh$ with 
homogeneous $g,h \in R$ always implies 
$f \mid g$ or $f \mid h$.
The algebra $R$ is called 
{\em factorially $K$-graded\/} if every
homogeneous nonzero nonunit $f \in R$
is a product of $K$-primes.

We enter the construction of factorially 
graded rings.
Consider a grading of the polynomial ring
$\KK[T_1,\ldots, T_{r_1}]$ by a finitely 
generated abelian group $K_1$ such that 
the variables $T_i$ are homogeneous.
Then we have a pair of exact sequences
$$ 
\xymatrix{
0
\ar[r]
&
{\ZZ^{k_1}}
\ar[r]^{Q_1^*}
&
{\ZZ^{r_1}}
\ar[r]^{P_1}
&
{\ZZ^n}
&
\\
0
\ar@{<-}[r]
&
{K_1}
\ar@{<-}[r]_{Q_1}
&
{\ZZ^{r_1}}
\ar@{<-}[r]_{P_1^*}
&
{\ZZ^n}
\ar@{<-}[r]
&
0
}
$$
where $Q_1 \colon \ZZ^{r_1} \to K_1$ is the degree 
map sending the $i$-th canonical basis vector 
$e_i$ to $\deg(T_i) \in K_1$.
We enlarge $P_1$ to a $n \times r_2$ matrix $P_2$ 
by concatenating further $r_2-r_1$ columns.
This gives a new pair of exact sequences
$$ 
\xymatrix{
0
\ar[r]
&
{\ZZ^{k_2}}
\ar[r]^{Q_2^*}
&
{\ZZ^{r_2}}
\ar[r]^{P_2}
&
{\ZZ^n}
&
\\
0
\ar@{<-}[r]
&
{K_2}
\ar@{<-}[r]_{Q_2}
&
{\ZZ^{r_2}}
\ar@{<-}[r]_{P_2^*}
&
{\ZZ^n}
\ar@{<-}[r]
&
0
}
$$

\begin{construction}
\label{constr:shiftideal}
Given a $K_1$-homogeneous ideal 
$I_1 \subseteq \KK[T_1,\ldots, T_{r_1}]$,
we transfer it to a $K_2$-homogeneous 
ideal 
$I_2 \subseteq \KK[T_1,\ldots, T_{r_2}]$
by taking extensions and contractions
according to the scheme
$$ 
\xymatrix{
{\KK[T_1, \ldots, T_{r_2}]}
\ar[d]_{\imath_2}
& 
&
{\KK[T_1, \ldots, T_{r_1}]}
\ar[d]^{\imath_1}
\\
{\KK[T_1^{\pm 1}, \ldots, T_{r_2}^{\pm 1}]}
\ar@{<-}[r]_{\pi_2^*}
&
{\KK[S_1^{\pm 1}, \ldots, S_n^{\pm 1}]}
\ar[r]_{\pi_1^*}
&
{\KK[T_1^{\pm 1}, \ldots, T_{r_1}^{\pm 1}]}
}
$$
where $\imath_1,\imath_2$ are 
the canonical embeddings and 
$\pi_i^*$ are the homomorphisms 
of group algebras defined by 
$P_i^* \colon \ZZ^n \to \ZZ^{r_i}$.
\end{construction}

Now let $I_1 \subseteq \KK[T_1,\ldots, T_{r_1}]$
be a $K_1$-homogeneous ideal 
and $I_2 \subseteq \KK[T_1,\ldots, T_{r_2}]$ the 
transferred $K_2$-homogeneous ideal.
Our result relates factoriality properties 
of the algebras 
$R_1 := \KK[T_1,\ldots,T_{r_1}] / I_1$
and $R_2 := \KK[T_1,\ldots,T_{r_2}] / I_2$
to each other.

\begin{theorem}
\label{prop:factrings}
Assume $R_1$, $R_2$ are integral,
$T_1,\ldots,T_{r_1}$ define 
$K_1$-primes in $R_1$ and 
$T_{1}, \ldots, T_{r_2}$ 
define $K_2$-primes in $R_2$.
Then the following statements are 
equivalent.
\begin{enumerate}
\item
The algebra $R_1$ is factorially $K_1$-graded.
\item
The algebra $R_2$ is factorially $K_2$-graded.
\end{enumerate}
\end{theorem}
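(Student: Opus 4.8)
The plan is to prove both implications at once by reducing each of the conditions (i) and (ii) to one and the same property of an auxiliary ring, namely factoriality (as an ordinary domain) of $A_0 := \KK[S_1^{\pm 1}, \ldots, S_n^{\pm 1}]/J$, where $J$ is the ideal produced by the contraction step of Construction~\ref{constr:shiftideal}, i.e.\ $J = (\pi_1^*)^{-1}(I_1 \cdot \KK[T_1^{\pm 1}, \ldots, T_{r_1}^{\pm 1}])$. The bridge between $R_i$ and $A_0$ is localization at the product of all variables, which is harmless precisely because the $T_j$ are $K_i$-prime.

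First I would establish a graded Nagata-type criterion, which I regard as the algebraic input from~\cite{Bech}: for a finitely generated integral $K$-graded algebra $R = \KK[T_1, \ldots, T_r]/I$ in which every $T_j$ defines a $K$-prime, $R$ is factorially $K$-graded if and only if the localization $R_{T_1 \cdots T_r}$ is factorially $K$-graded. The implication ``$\Rightarrow$'' is the standard stability of factorial grading under localization at homogeneous primes; the implication ``$\Leftarrow$'' is the graded analogue of Nagata's theorem, lifting a factorization from the localization by clearing the $K$-prime factors $T_j$. Applying this separately to $R_1$ and to $R_2$ turns (i) into factoriality of $(R_1)_{T_1 \cdots T_{r_1}}$ and (ii) into factoriality of $(R_2)_{T_1 \cdots T_{r_2}}$.

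Next I would identify these localizations. Because each variable becomes invertible, extension and contraction along the maps of Construction~\ref{constr:shiftideal} are mutually inverse on $K_i$-homogeneous ideals: a homogeneous component of a homogeneous ideal in a Laurent ring is a free rank-one module over the degree-zero subalgebra $\pi_i^*(\KK[S_1^{\pm 1}, \ldots, S_n^{\pm 1}])$. This yields graded presentations $(R_1)_{T_1 \cdots T_{r_1}} = \KK[T_1^{\pm 1}, \ldots, T_{r_1}^{\pm 1}]/\langle \pi_1^*(J) \rangle$ and $(R_2)_{T_1 \cdots T_{r_2}} = \KK[T_1^{\pm 1}, \ldots, T_{r_2}^{\pm 1}]/\langle \pi_2^*(J) \rangle$, and in both cases the degree-zero subalgebra is canonically isomorphic to the single ring $A_0$. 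I would then observe that each localization is a graded extension of $A_0$ which is free of rank one in every degree, so that a homogeneous element is a unit times an element of $A_0$, and the $K_i$-prime factorizations of homogeneous elements correspond bijectively to ordinary prime factorizations in $A_0$. Hence $(R_i)_{T_1 \cdots T_{r_i}}$ is factorially $K_i$-graded if and only if $A_0$ is a unique factorization domain, a condition independent of $i$. Combined with the first step this gives (i)$\Leftrightarrow$(ii).

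The main obstacle I anticipate is the graded Nagata criterion of the first step, in particular its nontrivial direction ``$\Leftarrow$'': one must lift factorizations from $R_{T_1 \cdots T_r}$ back to $R$ while controlling the $K$-grading and possible torsion in $K$, and it is exactly here that the results of~\cite{Bech} are needed. By contrast, once this criterion is available the remaining steps are the formal, and essentially elementary, reduction of both sides to factoriality of the common ring $A_0$.
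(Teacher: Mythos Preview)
Your proposal is correct and follows essentially the same route as the paper: both arguments reduce (i) and (ii) to the ordinary factoriality of the common degree-zero Laurent algebra (your $A_0$, the paper's $R_1''\cong R_2''$) by first localizing at the product of the variables and then passing to degree zero. The only cosmetic difference is packaging: you split the reduction into a graded Nagata step plus a ``units in every degree'' step, whereas the paper invokes \cite[Thm.~1.1]{Bech} once to go directly from $R_i$ to the UFD property of $(R_i')_0$; your anticipated obstacle is exactly the content of that citation.
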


\begin{proof}
First observe that the homomorphisms $\pi_j^*$ embed 
$\KK[S_1^{\pm 1},\ldots, S_{n}^{\pm 1}]$ as the 
degree zero part of the respective $K_j$-grading 
and fit into a commutative diagram 
$$ 
\xymatrix{
I_2
\ar@{}[r]|{\subseteq \qquad\quad}
\ar[d]
&
{\KK[T_1, \ldots, T_{r_2}]}
\ar[d]_{\imath_2}
& 
&
{\KK[T_1, \ldots, T_{r_1}]}
\ar[d]^{\imath_1}
&
I_1
\ar[d]
\ar@{}[l]|{\qquad\quad \supseteq}
\\
I_2'
\ar@{}[r]|{\subseteq \qquad\quad}
&
{\KK[T_1^{\pm 1},\ldots, T_{r_2}^{\pm 1}]}
\ar[rr]^{\psi \colon T_i \mapsto 
         \begin{cases}
\scriptstyle  T_i & \scriptstyle 1 \le i \le r_1 
\\ 
\scriptstyle  1 & \scriptstyle r_1+1 \le i \le r_2
         \end{cases}}
&
&
{\KK[T_1^{\pm 1},\ldots, T_{r_1}^{\pm 1}]}
&
I_1'
\ar@{}[l]|{\qquad\quad \supseteq}
\\
&
I_2''
\ar[u]
\ar[ul]
&
{\KK[S_1^{\pm 1},\ldots, S_{n}^{\pm 1}]}
\ar[ul]^{\pi_2^*}
\ar[ur]_{\pi_1^*}
&
I_1''
\ar[u]
\ar[ur]
&
}
$$

The factor ring $R_1'$ of the extension 
$I_1' := \bangle{\imath_1(I_1)}$ 
is obtained from $R_1$ by localization
with respect to $K_1$-primes $T_1, \ldots, T_{r_1}$:
$$
R_1'
\ := \ 
\KK[T_1^{\pm 1}, \ldots, T_{r_1}^{\pm 1}] / I_1'
\ \cong \  
(R_1)_{T_1 \cdots T_{r_1}}.
$$
The ideal $I_1''$ is the degree zero part of 
$I_1'$.
Thus, its factor algebra is the degree zero 
part of $R_1'$:
$$
R_1''
\ := \ 
\KK[T_1^{\pm 1}, \ldots, T_{r_1}^{\pm 1}]_0 / I_1''
\ \cong \ 
{(R_1')}_0.
$$
Note that $\KK[T_1^{\pm 1}, \ldots, T_{r_1}^{\pm 1}]$ and
hence $R_1'$ admit units in every degree.
Thus, \cite[Thm.~1.1]{Bech} yields that $R_1$ is a 
factorially $K_1$-graded if and only if
$R_1''$ is a UFD.

The homomorphism $\psi$ restricts to an isomorphism 
$\psi_0$ of the respective degree zero parts.
Thus, the shifted ideal $I_2'' := \psi_0^{-1}(I_1'')$
defines an algebra $R_2''$ isomorphic to~$R_1''$:
$$ 
R_2'' 
\ := \ 
\KK[T_1^{\pm 1}, \ldots, T_{r_2}^{\pm 1}]_0 / I_2''
\ \cong \ 
R_1''.
$$
The ideal 
$I_2' := \bangle{\pi_2^*((\pi^*_1)^{-1}(I_1'))}$
has $I_2''$ as its degree zero part and
$\KK[T_1^{\pm 1}, \ldots, T_{r_2}^{\pm 1}]$ admits 
units in every degree.
The associated $K_2$-graded algebra
$$ 
R_2' 
\ := \ 
\KK[T_1^{\pm 1}, \ldots, T_{r_2}^{\pm 1}] / I_2'
$$
is the localization of $R_2$ by the $K_2$-primes
$T_1, \ldots, T_{r_2}$.
Again by~\cite[Thm.~1.1]{Bech} we obtain
that $R_2''$ is a UFD if and only if 
$R_2$ is factorially $K_2$-graded. 
\end{proof}

The following observation is intended for practical 
purposes; it reduces, for example, the number of 
necessary primality tests.

\begin{proposition}
\label{prop:K1primcrit}
Assume that $R_1$ is integral and the canonical map 
$K_2 \to K_1$ admits a section (e.g. $K_1$ is free).
\begin{enumerate}
\item
Let $T_1,\ldots, T_{r_1}$ define $K_1$-primes in $R_1$
and $T_{r_1+1},\ldots, T_{r_2}$ define $K_2$-primes in $R_2$.
If no $T_j$ with $j \ge r_1+1$ divides a $T_i$ with 
$i \le r_1$, then also $T_1,\ldots, T_{r_1}$ define 
$K_2$-primes in $R_2$.
\item
The ring $R_2$ is integral.
Moreover, if $R_1$ is normal and 
$T_{r_1+1},\ldots, T_{r_2}$ 
define primes in $R_2$ (e.g. they are $K_2$-prime 
and $K_2$ is free), then $R_2$ is normal.
\end{enumerate}
\end{proposition}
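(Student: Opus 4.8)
My plan is to prove the integrality claim of~(ii) first, as it is used throughout~(i), then to prove~(i), and finally the normality claim of~(ii). Throughout I keep the rings $R_1' = (R_1)_{T_1\cdots T_{r_1}}$, $R_2' = (R_2)_{T_1\cdots T_{r_2}}$, $R_1'' = (R_1')_0$, $R_2'' = (R_2')_0$ and the isomorphism $R_1''\cong R_2''$ of degree-zero parts furnished in the proof of Theorem~\ref{prop:factrings}. Since $R_1$ is integral, its localization $R_1'$ is integral, hence so is the degree-zero part $R_1''$, and via $R_1''\cong R_2''$ also $R_2''$. Now $R_2'$ is $K_2$-graded with integral degree-zero part $R_2''$ and carries a monomial unit $T^a$ in every degree (here $Q_2$ is surjective and $R_2'\neq 0$). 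The point of the section of $K_2 \to K_1$ is precisely to upgrade this data to integrality of $R_2'$: it splits off a copy of $K_1$ and, together with the invertible monomials coming from the new variables, identifies $R_2'$, up to its grading, with a Laurent extension of the domain $R_2''$; this is where I would invoke the graded results of~\cite{Bech}. Granting $R_2'$ integral, the ideal $I_2'$ is prime, and since $I_2 = \imath_2^{-1}(I_2')$ is its contraction along $\imath_2$, the ideal $I_2$ is prime and $R_2$ is integral; equivalently, $R_2$ embeds into the domain $R_2'$.

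For part~(i), fix $i \le r_1$; I must show that $T_i$ is $K_2$-prime in the now integral ring $R_2$. The key reduction is that $K_2$-primality of $T_i$ in $R_2$ is equivalent to its primality in the localization obtained by inverting all $T_{i'}$ with $i'\neq i$, provided $T_i$ stays a nonzero nonunit there. It is nonzero since $R_2$ is integral, and it is not turned into a unit: inverting the new variables $T_{r_1+1},\ldots,T_{r_2}$ cannot absorb $T_i$ by the hypothesis that no such $T_j$ divides $T_i$, and inverting the remaining $K_1$-primes $T_{i'}$ ($i'\le r_1$) cannot do so either because $T_i$ is $K_1$-prime in $R_1$. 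I would then match the degree-zero data of this localization with that of $R_1$ localized away from $T_i$, via the transfer construction~\ref{constr:shiftideal}; concretely, one applies the integrality argument of the first paragraph to the induced transfer between $R_1/\bangle{T_i}$ and $R_2/\bangle{T_i}$, the former being $K_1$-integral because $T_i$ is $K_1$-prime in $R_1$. Hence $T_i$ remains prime and is $K_2$-prime in $R_2$.

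For normality, assume $R_1$ normal and $T_{r_1+1},\ldots,T_{r_2}$ prime in $R_2$; by~(i) all the $T_i$ are then prime. Localizing the normal ring $R_1$ preserves normality, so $R_1'$ is normal and its degree-zero part $R_1''\cong R_2''$ is normal; as before the section identifies $R_2'$ with a Laurent extension of $R_2''$, so $R_2'$ is normal. It remains to descend normality from $R_2'$ to $R_2$ along the primes $\bangle{T_i}$, which I carry out with Serre's criterion. For condition (R1): any height-one prime of $R_2$ either contains no $T_i$ — and then survives in the normal ring $R_2'$ — or contains some $T_i$ and hence equals $\bangle{T_i}$, in which case $(R_2)_{\bangle{T_i}}$ is a one-dimensional local domain whose maximal ideal is generated by $T_i$, so a discrete valuation ring. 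With (R1) and condition (S2) in hand, $R_2$ is normal.

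The main obstacle is twofold. First, transporting integrality (and normality of $R_2'$) across the possibly torsion group $K_2$: this is exactly where the section hypothesis is indispensable, and where I lean on~\cite{Bech} rather than an ad hoc leading-term argument, which fails when $K_2$ has torsion. Second, Serre's condition (S2) for $R_2$ is the genuine technical point left open above; I would derive it from the Cohen--Macaulayness of the construction — immediate in the hypersurface situation of Theorem~\ref{mainthm2} — and in general argue it from (S2) of $R_1$ through the transfer.
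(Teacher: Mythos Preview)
Your argument for~(i) has a genuine gap. The claimed equivalence --- $K_2$-primality of $T_i$ in $R_2$ is equivalent to its primality in the localization inverting all other $T_{i'}$, provided $T_i$ remains a nonunit --- is false in general: one also needs the inverted elements to be nonzerodivisors modulo $T_i$. For instance, in $R = \KK[x,y,z,w]/\bangle{xw - yz}$ the element $x$ is prime in $R_y$ and still a nonunit there, yet $x$ is not prime in~$R$. In your setting, regularity of the old variables $T_{i'}$ (with $i' \le r_1$, $i' \ne i$) modulo $T_i$ in $R_2$ is exactly what you are trying to establish, so the reduction is circular. The fallback via an ``induced transfer between $R_1/\bangle{T_i}$ and $R_2/\bangle{T_i}$'' does not fit Construction~\ref{constr:shiftideal} either, since that construction passes through Laurent rings in which $T_i$ is already inverted.

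The paper's proof avoids both problems by localizing only at the \emph{new} variables $T_{r_1+1},\ldots,T_{r_2}$, which are $K_2$-primes coprime to $T_i$ by hypothesis. The section splits $K_2 = K_2' \oplus K_1'$ with $K_2'$ generated by the degrees of the new variables, and the crucial observation is that the $K_2'$-degree-zero Veronese subalgebra of $(R_2)_{T_{r_1+1}\cdots T_{r_2}}$ is isomorphic to $R_1$ itself, not merely to~$R_1''$. This carries $K_1$-primality of $T_i$ in $R_1$ directly to $K_2$-primality in the partial localization (shift any $K_2$-homogeneous element into the Veronese by a monomial unit in the new variables), after which descent along the known $K_2$-primes $T_{r_1+1},\ldots,T_{r_2}$ yields~(i). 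The same identification settles normality in~(ii) without Serre's criterion: if $R_1$ is normal then the partial localization is normal (free $K_2'$-grading over a normal degree-zero part), and for a domain normality passes from $R_f$ down to $R$ whenever $f$ is prime. Your ``genuine technical point left open'' concerning $(S_2)$ thus evaporates.
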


\begin{proof}
The exact sequences involving the grading groups 
$K_1$ and $K_2$ fit into a commutative diagram
where the upwards sequences are exact and
$\ZZ^{r_2-r_1} \to K_2'$ is an isomorphism:
$$ 
\xymatrix@R15pt{
&
0
&
0
&
0
&
\\
0
\ar@{<-}[r]
&
{K_1}
\ar@{<-}[r]^{Q_1}
\ar[u]
&
{\ZZ^{r_1}}
\ar@{<-}[r]^{P_1^*}
\ar[u]
&
{\ZZ^n}
\ar@{<-}[r]
\ar[u]
&
0
\\
0
\ar@{<-}[r]
&
{K_2}
\ar@{<-}[r]_{Q_2}
\ar[u]
&
{\ZZ^{r_2}}
\ar@{<-}[r]_{P_2^*}
\ar[u]
&
{\ZZ^n}
\ar@{<-}[r]
\ar[u]
&
0
\\
&
{K_2'}
\ar[u]
\ar@{<-}[r]
&
{\ZZ^{r_2-r_1}}
\ar@{<-}[r]
\ar[u]
&
0
\ar[u]
&
\\
&
0
\ar[u]
&
0
\ar[u]
&
&
}
$$
Moreover, denoting by  $K_1' \subseteq K_2$ 
the image of the section $K_1 \to K_2$,
there is a splitting $K_2 = K_2' \oplus K_1'$.
As $K_2' \subseteq K_2$ is the subgroup 
generated by the degrees of $T_{r_1+1}, \ldots, T_{r_2}$,
we obtain a commutative diagram
$$ 
\xymatrix{
{\KK[T_1, \ldots, T_{r_2}]}
\ar[d]_{\imath_2}
& 
&
&
&
\\
{\KK[T_1, \ldots, T_{r_1},T_{r_1+1}^{\pm 1},\ldots, T_{r_2}^{\pm 1}]}
\ar[rrrr]^{\qquad \psi \colon T_i \mapsto 
         \begin{cases}
\scriptstyle  T_i & \scriptstyle 1 \le i \le r_1 
\\ 
\scriptstyle  1 & \scriptstyle r_1+1 \le i \le r_2
         \end{cases}}
&
&
&
&
{\KK[T_1,\ldots, T_{r_1}]}
\\
{\KK[T_1, \ldots, T_{r_1},T_{r_1+1}^{\pm 1},\ldots, T_{r_2}^{\pm 1}]}_{0}
\ar[u]^{\mu}
\ar[urrrr]_{\cong}
&
&
&
&
}
$$
where the map $\mu$ denotes the embedding of the 
degree zero part with respect to the $K_2'$-grading.
By the splitting $K_2 = K_2' \oplus K_1'$, the image 
of $\mu$ is precisely the Veronese subalgebra 
associated to the subgroup $K_1' \subseteq K_2$.
For the factor rings $R_2$ and $R_1$ by the 
ideals $I_2$ and $I_1$, 
the above diagram leads to the following situation
$$ 
\xymatrix{
R_2
\ar[d]_{\imath_2}
& 
&
\\
(R_2)_{T_{r_1+1} \cdots T_{r_2}}
\ar[rr]^{\psi}
&
&
R_1
\\
{\left( (R_2)_{T_{r_1+1} \cdots T_{r_2}} \right)}_{0}
\ar[u]^{\mu}
\ar[urr]_{\cong}
&
&
}
$$

To prove~(i), consider a variable $T_i$ with 
$1 \le i \le r_1$.
We have to show that $T_i$ defines a $K_2$-prime 
element in $R_2$.
By the above diagram,
$T_i$ defines a $K_1'$-prime element in 
$((R_2)_{T_{r_1+1} \cdots T_{r_2}})_{0}$,
the Veronese subalgebra of $R_2$ defined by 
$K_1' \subseteq K_2$.
Since every $K_2$-homogeneous element of
$(R_2)_{T_{r_1+1} \cdots T_{r_2}}$  can be 
shifted by a homogeneous unit into
$((R_2)_{T_{r_1+1} \cdots T_{r_2}})_{0}$,
we see that $T_i$ defines a $K_2$-prime in
$(R_2)_{T_{r_1+1} \cdots T_{r_2}}$.
By assumption, $T_{r_1+1}, \ldots, T_{r_2}$ 
define $K_2$-primes in $R_2$ and are all coprime 
to $T_i$.
It follows that $T_i$ defines a $K_2$-prime in $R_2$.

We turn to assertion~(ii).
As just observed, the degree zero part
$((R_2)_{T_{r_1+1} \cdots T_{r_2}})_{0}$ 
of the $K_2'$-grading is isomorphic to 
$R_1$ and thus integral (normal if $R_1$ is so).
Moreover, the $K_2'$-grading is free in 
the sense that the associated torus 
$\Spec \, \KK[K_2']$ 
acts freely on $\Spec \, (R_2)_{T_{r_1+1} \cdots T_{r_2}}$.
It follows that $(R_2)_{T_{r_1+1} \cdots T_{r_2}}$ 
is integral (normal if $R_1$ is so).
Construction~\ref{constr:shiftideal} gives that
$R_2$ is integral.
Moreover, if $T_{r_1+1}, \ldots, T_{r_2}$ define 
primes in $R_2$, we can conclude that 
$R_2$ is normal.
\end{proof}

Let us apply the results to Cox rings. 
We first briefly recall 
the basic definitions and facts; for details 
we refer to~\cite{ADHL}.
For a normal variety $X$ with 
finitely generated divisor class group
$\Cl(X)$ and $\Gamma(X,\mathcal{O}^*) = \KK^*$, 
one defines its Cox ring as the graded ring
\begin{eqnarray*}
\mathcal{R}(X) 
& := &
\bigoplus_{\Cl(X)} \Gamma(X,\mathcal{O}(D)).
\end{eqnarray*}
This ring is factorially $\Cl(X)$-graded.
Moreover, if $\mathcal{R}(X)$ is finitely 
generated, then
one can reconstruct $X$ from $\mathcal{R}(X)$
as a good quotient of an open subset of 
$\Spec \, \mathcal{R}(X)$ by the action of 
$\Spec \, \KK[\Cl(X)]$.

Now return to the setting fixed at the beginning 
of the section and assume in addition that 
the columns of $P_2$ are pairwise different 
primitive vectors in $\ZZ^n$ and those of $P_1$ 
generate $\QQ^{n}$ as a convex cone.
Suppose we have toric Cox constructions
$\pi_i \colon \hat{Z}_i \to Z_i$, 
where $\hat{Z}_i \subseteq \KK^{r_i}$ are 
open toric subvarieties and $\pi_i$ 
are toric morphisms defined by $P_i$,
see~\cite{Cox}.
Then the canonical map $Z_2 \to Z_1$ 
is a toric modification.
Consider the ideal $I_1$ as discussed 
before and the geometric data
$$
\bar{X}_1 \ := \ V(I_1) \subseteq \KK^{r_1},
\qquad
\hat{X}_1 \ := \ \bar{X}_1 \cap \hat{Z}_1,
\qquad
X_1 \ := \ \pi_1(\hat{X}_1) \subseteq Z_1.
$$
Assume that $R_1$ is factorially $K_1$-graded
and $T_1, \ldots, T_{r_1}$ define pairwise
nonassociated prime elements in $R_1$.
Then $R_1$ is the Cox ring of $X_1$,
see~\cite{ADHL}.
Our statement concerns the Cox ring of the 
proper transform 
$X_2 \subseteq Z_2$ of $X_1 \subseteq Z_1$ 
with respect to $Z_2 \to Z_1$.

\begin{corollary}
\label{cor:ambientblowup}
In the above setting, assume that $R_2$ is 
normal and the variables $T_1, \ldots, T_{r_2}$ 
define pairwise nonassociated $K_2$-prime 
elements in $R_2$. Then the $K_2$-graded 
ring $R_2$ is the Cox ring of $X_2$.
\end{corollary}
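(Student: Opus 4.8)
The plan is to reduce the statement to the Cox ring characterization of~\cite{ADHL} that was already invoked for $X_1$, by transporting every structural hypothesis from index~$1$ to index~$2$. The decisive new input is that $R_2$ is factorially $K_2$-graded, which is exactly what Theorem~\ref{prop:factrings} delivers; the remaining work is to confirm that the geometric data $(P_2, I_2)$ satisfy the hypotheses of that characterization and that the variety it produces is the proper transform $X_2$.

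First I would check the hypotheses of Theorem~\ref{prop:factrings}. Both $R_1$ and $R_2$ are integral: $R_1$ is a Cox ring, hence a domain, while $R_2$ is a domain because it is assumed normal (alternatively, integrality of $R_2$ follows from Proposition~\ref{prop:K1primcrit}~(ii)). The variables $T_1,\ldots,T_{r_1}$ define $K_1$-primes in $R_1$, since by the standing assumption they define pairwise nonassociated prime elements there, and $T_1,\ldots,T_{r_2}$ define $K_2$-primes in $R_2$ by hypothesis. As $R_1$ is factorially $K_1$-graded, Theorem~\ref{prop:factrings} then yields that $R_2$ is factorially $K_2$-graded.

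Next I would identify the variety. By construction $X_2 = \pi_2(V(I_2) \cap \hat{Z}_2)$, and I claim this is the proper transform of $X_1$ under the toric modification $Z_2 \to Z_1$. This is built into Construction~\ref{constr:shiftideal}: the passage from $I_1$ to $I_2$ factors through the group algebra $\KK[S_1^{\pm 1},\ldots,S_n^{\pm 1}]$ via the maps $\pi_i^*$, so that $V(I_2)$ and $V(I_1)$ restrict to matching subvarieties of the respective big tori. Consequently $\pi_2(V(I_2)\cap \hat{Z}_2)$ and $\pi_1(V(I_1)\cap \hat{Z}_1)$ coincide over the common big torus of $Z_1$ and $Z_2$, whence $X_2$ is the closure in $Z_2$ of the torus part of $X_1$, i.e.~the proper transform. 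With this in hand the data $(P_2, I_2)$ meet precisely the conditions under which~\cite{ADHL} was applied to $X_1$: the columns of $P_2$ are pairwise different primitive vectors, $R_2$ is normal, integral and factorially $K_2$-graded, and $T_1,\ldots,T_{r_2}$ define pairwise nonassociated $K_2$-primes. The characterization therefore identifies $R_2$ as the Cox ring of $X_2$.

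The main obstacle will be the geometric identification of the preceding step, namely verifying that the ambient modification carries the characteristic space $\hat{X}_1 \to X_1$ to a genuine characteristic space $\hat{X}_2 \to X_2$ with $X_2$ the proper transform, rather than a total transform or a closure acquiring spurious components. Concretely one must confirm that $V(I_2)\cap \hat{Z}_2$ meets the relevant toric orbits correctly, that the complement of the relevant locus in $\Spec R_2$ has codimension at least two (so that the reconstruction of~\cite{ADHL} returns exactly $R_2$ as the algebra of global sections), and that $\Cl(X_2)=K_2$ together with $\Gamma(X_2,\Of^*)=\KK^*$. These last properties follow from the primitivity and spanning conditions on the columns of $P_2$ and the pairwise nonassociatedness of the $T_i$, but they are the points that demand care.
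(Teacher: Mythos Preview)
Your proposal is correct and follows essentially the same approach as the paper: apply Theorem~\ref{prop:factrings} to obtain factorial $K_2$-gradedness, identify $V(I_2)$ with the proper transform via the common big torus, and invoke~\cite{ADHL}. The paper's proof is merely more terse, packaging your geometric identification as the single observation that $R_2$ is the algebra of functions of the closure in $\hat{Z}_2$ of $\pi_2^{-1}(X_2 \cap \TT^{r_2})$; the obstacles you flag in your final paragraph are precisely what is absorbed into the citation of~\cite{ADHL}.
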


\begin{proof}
According to Theorem~\ref{prop:factrings}, 
the ring $R_2$ is factorially $K_2$-graded.
Moreover, with the toric Cox construction 
$\pi_2 \colon \rq{Z}_2 \to Z_2$, we obtain
that $R_2$ is the algebra of functions 
of the closure $\rq{X}_2 \subseteq \rq{Z}_2$
of $\pi_2^{-1}(X_2 \cap \TT^{r_2})$.
Thus, \cite{ADHL} yields that $R_2$ is the
Cox ring of $X_2$.
\end{proof}

\begin{example}
\label{ex:kubik}
We start with the UFD $R_1 = \KK[T_1,\ldots,T_8] / I_1$,
where the ideal $I_1$ is defined as
$$
I_1 
\ = \ 
\bangle{T_1T_2 + T_3T_4 + T_5T_6 + T_7T_8}.
$$
The ideal $I_1$ is homogeneous with respect 
to the standard grading given by
$Q_1 = [1,\ldots,1]$.
A Gale dual is $P_1 = [e_0,e_1, \ldots,e_7]$, 
where $e_0 = -e_1 -\ldots - e_7$ and $e_i$ are 
the canonical basis vectors.
Concatenating $e_1+e_3$ gives a matrix $P_2$. 
The resulting UFD is $R_2 = \KK[T_1,\ldots,T_9] / I_2$ 
with
$$
I_2
\ = \ 
\bangle{T_1T_2T_9 + T_3T_4T_9 + T_5T_6 + T_7T_8}.
$$ 
\end{example}

Corollary~\ref{cor:ambientblowup}  in fact 
characterizes the modifications preserving 
finite generation of Cox rings.
Together with the combinatorial contraction 
criterion~\cite[Prop~6.7]{HaMMJ}, it gives 
the following.

\goodbreak

\begin{remark}
Let $X' \to X$ be a birational morphism 
of normal projective varieties, 
where $X$ is a Mori dream space, i.e.~has finitely 
generated Cox ring. Then the following statements are 
equivalent.
\begin{enumerate}
\item
$X'$ is a Mori dream space.
\item
$X' \to X$ arises from a toric ambient
modification as in Corollary~\ref{cor:ambientblowup}.
\end{enumerate}
\end{remark}

\section{Proof of Theorem~\ref{mainthm2}}
\label{sec:quadrics1}

We approach the Chow quotient via toric embedding.
The idea then is to obtain the Cox ring via toric 
ambient modifications. An essential step for this 
is the explicit description of the rays of certain
Gelfand-Kapranov-Zelevinsky decompositions given
in Proposition~\ref{cor:gkzrays}; note that in the 
setting of polytopes related statements implicitly 
occur in literature, e.g.~\cite{HeJo,He}.

Recall that the Gelfand-Kapranov-Zelevinsky 
decomposition 
associated to a matrix $P \in \Mat(n,r+1;\ZZ)$;
is the fan $\Sigma$  in $\QQ^n$ with the cones
$\sigma(v) = \cap_{v \in \tau^\circ} \tau$, where 
$v \in \QQ^n$ and $\tau$ runs through the $P$-cones, 
i.e., the cones generated by some of the columns 
$p_0,\ldots, p_r$ of~$P$.
Fix a Gale dual matrix $Q \in \Mat(k,r+1;\ZZ)$, where 
$r+1 = k+n$, and denote the columns of $Q$ by 
$q_0,\ldots,q_r$.
Then we have mutually dual exact sequences of 
rational vector spaces
$$ 
\xymatrix{
0
\ar[r]
&
\QQ^k
\ar[r]^{Q^*}
&
\QQ^{r+1}
\ar[r]^{P}
&
\QQ^{n}
\ar[r]
&
0
\\
0
\ar@{<-}[r]
&
\QQ^k
\ar@{<-}[r]_{Q}
&
\QQ^{r+1}
\ar@{<-}[r]_{P^*}
&
\QQ^n
\ar@{<-}[r]
&
0
}
$$
By a {\em $Q$-hyperplane\/} we mean a linear hyperplane 
in $\QQ^k$ generated by some of the columns $q_0, \ldots, q_{r}$.
Given a $Q$-hyperplane we write it as the kernel $u^\perp$ of 
a linear form $u$ and associate to it a ray in $\QQ^{n}$ 
as follows:
\begin{eqnarray*}
\varrho(u)
& := & 
\mathrm{cone}\left(\sum_{u(q_i)> 0} u(q_i)p_i\right).
\end{eqnarray*}
It turns out that $\varrho(u) = \varrho(-u)$ holds 
and thus the ray is well defined.
We say that a column $q_i$ of $Q$ is {\em extremal\/} if 
it does not belong to the relative interior of the
``movable cone'' $\cap_i \, \cone(q_j; \; j \ne i)$.

\begin{proposition}
\label{cor:gkzrays}
Let $Q$ and $P$ be Gale dual matrices as before,
assume that the columns of $P$ are pairwise linearly 
independent nonzero vectors generating $\QQ^{n}$ 
as a cone and let $\Sigma$ be the Gelfand-Kapranov-Zelevinsky 
decomposition  associated to $P$.
\begin{enumerate} 
\item 
If a ray $\varrho \in \Sigma$ is the intersection of two 
$P$-cones, then $\varrho = \varrho(u)$ holds with a 
$Q$-hyperplane $u^\perp$.
\item
If $k=2$ holds, then every ray of $\Sigma$ 
can be obtained as an intersection of two $P$-cones.
\item 
Assume  $k=2$ and fix nonzero linear forms $u_i$ 
with $u_i \perp q_i$. 
Then the rays of $\Sigma$ are 
$\cone(p_0), \ldots, \cone(p_r)$
and the $\varrho(u_i)$ with $q_i$ not extremal.
\end{enumerate}
\end{proposition}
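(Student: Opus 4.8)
\emph{The engine.} The plan rests on a single identity. For a linear form $u$ on $\QQ^k$ the tuple $(u(q_0),\ldots,u(q_r))$ is exactly $Q^*u\in\operatorname{im}(Q^*)=\ker(P)$, so $\sum_i u(q_i)p_i=0$ for every $u$. Hence the defining vector of $\varrho(u)$ may be read in two ways,
\[
v_u \ := \ \sum_{u(q_i)>0}u(q_i)\,p_i \ = \ \sum_{u(q_i)<0}\bigl(-u(q_i)\bigr)p_i ,
\]
which at once yields $\varrho(u)=\varrho(-u)$ and exhibits $v_u$ as a point of $\tau_+\cap\tau_-$, where $\tau_+=\cone(p_i;\,u(q_i)>0)$ and $\tau_-=\cone(p_i;\,u(q_i)<0)$ have disjoint generating sets. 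If moreover $c=Q^*u$ is a circuit of the matroid of $P$, then these two sides carry only the single relation $c$, so any $y\in\tau_+\cap\tau_-$ forces the associated relation to be a multiple of $c$; thus $\tau_+\cap\tau_-=\cone(v_u)$ \emph{exactly}. This computation is the bridge between $Q$-hyperplanes and intersections of two $P$-cones, and I would use it in both directions.

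\emph{Part (i).} Let $\varrho=\tau\cap\tau'$ be a ray with generator $v$, and pass to the minimal faces $F\preceq\tau$, $F'\preceq\tau'$ whose relative interiors contain $v$; then $F\cap F'=\varrho$ still. Writing $v=\sum_{i\in I}a_ip_i=\sum_{i\in I'}b_ip_i$ with all $a_i,b_i>0$ produces a nonzero relation $c\in\ker(P)=\operatorname{im}(Q^*)$. Decomposing $c$ conformally into circuits, the partial vector of each circuit lies in $F$ and in $F'$, hence in $F\cap F'=\varrho$; choosing a circuit $c^{(t)}$ with nonzero partial vector gives $\cone(v_{c^{(t)}})=\varrho$. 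Since the matroids of $P$ and $Q$ are Gale dual, the zero set of the circuit $c^{(t)}$ is a hyperplane flat of $Q$, i.e.\ the columns $q_i$ with $c^{(t)}_i=0$ span a linear hyperplane $u_t^\perp$; this is a $Q$-hyperplane and $\varrho=\varrho(u_t)$. The only degenerate possibility, $F=F'=\cone(p_j)$, produces $\varrho=\cone(p_j)$, a ray that is listed separately in (iii).

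\emph{Part (iii), easy inclusion.} Each $\cone(p_j)$ is a ray of $\Sigma$: it is a $P$-cone with $p_j$ in its relative interior, so $\sigma(p_j)=\cone(p_j)$. For $k=2$ a $Q$-hyperplane is a line $\QQ q_i=u_i^\perp$, and $c^{(i)}=Q^*u_i$ is a circuit of the matroid of $P$ (its zero set $\{q_j\in\QQ q_i\}$ is a rank-one flat of the matroid of $Q$). Whenever $v_{u_i}\neq0$ the bridge gives $v_{u_i}$ in the relative interiors of the complementary cones $\tau_\pm$ of $u_i$, so $\sigma(v_{u_i})\subseteq\tau_+\cap\tau_-=\cone(v_{u_i})$ and $\varrho(u_i)$ is a ray of $\Sigma$. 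I would then check, by matching the two representations $v_{u_i}=\sum_{u_i(q_j)>0}u_i(q_j)p_j=\sum_{u_i(q_j)<0}(-u_i(q_j))p_j$ against the columns of $P$, that this ray differs from every $\cone(p_j)$ precisely when $q_i$ fails to be extremal, i.e.\ when $q_i$ lies in the relative interior of the movable cone; this bookkeeping against the movable-cone condition is the one place in (iii) demanding care.

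\emph{Part (ii) and the hard inclusion.} It remains to show every ray is of one of these two types; by (i) this is exactly Part (ii), that each ray is an intersection of \emph{two} $P$-cones. Take a ray $\varrho=\sigma(v)$ with $v$ not parallel to any $p_j$, so that $v$ lies in the relative interior of a minimal simplicial $P$-cone of dimension at least two and on some wall hyperplane $H=\operatorname{span}(p_j;\,j\in J'')$ spanned by $n-1$ of the $p_j$. Pulling the normal of $H$ back through $P^*$ lands in $\ker(Q)$ and turns the wall datum into a linear relation among the $q_j$; because $k=2$ these relations live in the plane $\QQ^2$, and I would argue that the direction cut out in $\QQ^n$ by the whole family of walls through $v$ is governed by a single $Q$-hyperplane $\QQ q_i$, so that $v$ is proportional to $v_{u_i}$ and $\varrho=\varrho(u_i)=\tau_+\cap\tau_-$. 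The delicate point — collapsing the defining intersection over \emph{all} $P$-cones through $\varrho$ down to the one complementary pair $\tau_\pm$, equivalently matching the specific generator $v$ to a circuit vector — is where I expect the real work to lie, and it is precisely the two-dimensionality of the relation space that should make it go through.
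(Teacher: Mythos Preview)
Your approach to~(i) via conformal circuit decomposition is correct and genuinely different from the paper's. The paper instead notes that once the relative interiors of the two $P$-cones $P(\gamma_1^*)$ and $P(\gamma_2^*)$ meet, the faces $\gamma_1,\gamma_2\preceq\gamma$ of the positive orthant admit an \emph{invariant separating linear form} $f=Q^*(u)$; writing $f=f^+-f^-$ gives $P(f^+)=P(f^-)$ and hence $\varrho=\cone(P(f^+))=\varrho(u)$ in one stroke, with no circuit decomposition needed. Your argument reaches essentially the same $u$ by extracting a conformal circuit summand from the relation produced by the two representations of $v$, and your easy inclusion for~(iii) is also fine. One small correction: the ``degenerate possibility'' is not only $F=F'=\cone(p_j)$ but more generally any overlap $I\cap I'\ne\emptyset$ of the chosen index sets; the conclusion $\varrho=\cone(p_j)$ still follows.

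The real gap is~(ii), as you yourself flag. The sketch --- find a wall hyperplane $H$ through $v$, pull its normal back via $P^*$ into $\ker(Q)$, then use $k=2$ to collapse everything to a single $Q$-line --- is not a proof. The pullback gives a relation $\sum_i w(p_i)\,q_i=0$, an element of $\ker(Q)\subseteq\QQ^{r+1}$ rather than a linear form on~$\QQ^2$, and you offer no mechanism by which the family of such wall relations singles out one line $\QQ q_i$, nor any argument why two of the many $P$-cones through $v$ already cut the intersection down to a ray. The paper proceeds structurally: it invokes an order-reversing bijection, taken from~\cite{ADHL}, between cones of $\Sigma$ and \emph{normal $\gamma$-collections} (maximal separable families of faces of the orthant whose $Q$-images form a normal fan), so that rays of $\Sigma$ correspond to submaximal such collections. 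For $k=2$ these are then classified into exactly two shapes: a single facet $\gamma_0\precneqq\gamma$ with $Q(\gamma_0)=Q(\gamma)$, yielding a ray $\cone(p_i)$; and a pair $\langle\gamma_1,\gamma_2\rangle$ with $Q(\gamma_1)\cup Q(\gamma_2)=Q(\gamma)$, yielding $\varrho=P(\gamma_1^*)\cap P(\gamma_2^*)$, a genuine two-cone intersection that then feeds into~(i). That classification --- specifically, that for $k=2$ a submaximal normal $\gamma$-collection has at most two maximal members --- is the missing ingredient, and your wall-hyperplane heuristic does not substitute for it.
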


The proof relies on the fact that $\Sigma$ 
describes the lifts of regular $Q$-subdivisions.
We adapt the precise formulation of this statement 
to our needs.
Let $\gamma \subseteq \QQ^{r+1}$ be the positive orthant 
and define a {\em $\gamma$-collection\/} 
to be a set $\mathfrak{B}$ of faces of $\gamma$ such that 
any two $\gamma_1,\gamma_2 \in \mathfrak{B}$ admit 
an {\em invariant separating linear form\/} 
$f$ in the sense that
$$ 
P^*(\QQ^n) \subseteq f^\perp,
\qquad
f_{\vert \gamma_1}  \ge 0,
\qquad
f_{\vert \gamma_2}  \le 0,
\qquad
f^\perp \cap \gamma_i = \gamma_1 \cap \gamma_2.
$$
Write $\mathfrak{B}_1 \le \mathfrak{B}_2$ if for 
every $\gamma_1 \in \mathfrak{B}_1$ there is 
a $\gamma_2 \in \mathfrak{B}_2$ with 
$\gamma_1 \subseteq \gamma_2$.
Moreover, call a $\gamma$-collection $\mathfrak{B}$ 
{\em normal\/} if it cannot be enlarged as a 
$\gamma$-collection and the images $Q(\gamma_0)$, 
where $\gamma_0 \in \mathfrak{B}$, form the normal fan 
of a polyhedron. 
For a face $\gamma_0 \preceq \gamma$, we denote
by $\gamma_0^* = \gamma_0^\perp \cap \gamma^\vee$ 
the corresponding face of the dual cone $\gamma^\vee$.

Now assume that the columns of $P$ are pairwise different
nonzero vectors.
Then~\cite[Sec.~II.2]{ADHL} provides us 
with an order-reversing bijection
$$
\left\{ 
\text{normal } \gamma \text{-collections}
\right\}
\ \to \
\Sigma,
\qquad\qquad
\mathfrak{B}
\ \mapsto \
\bigcap_{\gamma_0 \in \mathfrak{B}} P(\gamma_0^*).
$$

\begin{proof}[Proof of Proposition~\ref{cor:gkzrays}]
We prove (i).
Let $\varrho = P(\gamma_1^*)\cap P(\gamma_2^*)$ 
with $\gamma_1,\gamma_2 \preceq \gamma$.
We may assume that the relative interiors
$P(\gamma_1^*)^\circ$ and $P(\gamma_2^*)^\circ$ 
intersect nontrivially.
Then $\gamma_1$ and $\gamma_2$ admit an invariant 
separating linear form 
$f = Q^*(u)$ with a linear form $u$ on $\QQ^k$.
In terms of the components of 
$f_i = u(q_i)$ of $f$, we have
$$
\gamma_1 \ = \ \cone(e_i; \; f_i \ge 0),
\qquad\qquad
\gamma_2 \ = \ \cone(e_i; \; f_i \le 0).
$$
Write $f = f^+ - f^-$ with the unique vectors 
$f^+, f^- \in \QQ^{r+1}$ 
having only non-negative components.
Then $P(f) = 0$ gives $P(f^+) = P(f^-)$.
We conclude $\varrho = \cone(P(f^+))$ and 
the assertion follows.

We prove (ii) and (iii).
The rays of $\Sigma$ arise from normal 
$\gamma$-collections which are submaximal
with respect to ``$\le$'' in the sense that 
the only dominating $\gamma$-collection 
is the trivial collection 
$\bangle{\gamma}$ consisting of all faces 
$\gamma_0\preceq\gamma$ which are
invariantly separable from $\gamma$.
There are precisely two types of such submaximal 
collections:
\begin{itemize}
\item
the normal $\gamma$-collections 
$\mathfrak{B} = \bangle{\gamma_0}$, 
where $\gamma_0 \precneqq \gamma$ 
is a facet satisfying $Q(\gamma_0) = Q(\gamma)$,
\item
the normal $\gamma$-collections 
$\mathfrak{B} = \bangle{\gamma_1,\gamma_2}$, 
where $\gamma_1,\gamma_2 \precneqq \gamma$ are 
invariantly separable from each other and 
satisfy
$$
\gamma_i \ = \ Q^{-1}(Q(\gamma_i)) \cap \gamma,
\qquad
Q(\gamma) \ = \ Q(\gamma_1) \cup Q(\gamma_2).
$$
\end{itemize}
The submaximal $\gamma$-collections of the first type 
give the rays $\cone(p_i) \in \Sigma$ with $q_i$
not extremal.
If $q_i$ is extremal, then the (unique) $\gamma$-collection 
of the second type with 
$Q(\gamma_1) = \cone(q_j; \; j \ne i)$ defines 
the ray $\cone(p_i)$.
The remaining rays of $\Sigma$ are of the 
form $\varrho = P(\gamma_1^*) \cap P(\gamma_2)^*$ 
with the remaining collections of the second type.
\end{proof}

\begin{remark}
Statements~(ii) and~(iii) of Proposition \ref{cor:gkzrays} 
hold as well for pairs $P,Q$, where the columns of $Q$ 
generate the cone over a so called {\em totally-2-splittable\/} 
polytope; these have been studied in~\cite{HeJo,He}.
\end{remark}

As a further preparation of the proof of 
Theorem~\ref{mainthm2} we have to 
specialize the discussion of Section~\ref{sec:ambmod1}
to the case of a single defining equation.
The following notion will be used for an explicit 
description of the transferred ideal.

\goodbreak

\begin{definition}
\label{def:weaklift}
Consider an $n \times (r+1)$ matrix $P$
and an $n \times l$ matrix $B$, both integral.
A {\em weak $B$-lifting (with respect to $P$)\/} 
is an integral $(r+1) \times l$ matrix $A$ allowing a  
commutative diagram
$$ 
\xymatrix{
&
{\ZZ^{r+1+l}}
\ar[dl]_{\genfrac{}{}{0pt}{}{e_i \mapsto e_i}{e_j \mapsto m_je_j}}
\ar[dr]^{[E_{r+1},A]}
&
\\
{\ZZ^{r+1+l}}
\ar[d]_{[P,B]}
&&
{\ZZ^{r+1}}
\ar[d]^{P}
\\
{\ZZ^{n}}
\ar[rr]_{E_n}
&&
{\ZZ^{n}}
}
$$
where the $e_i$ are the first $r+1$, the $e_j$ the last $l$
canonical basis vectors of $\ZZ^{r+1+l}$,
the $m_{j}$ are positive 
integers and $E_n, E_{r+1}$ denote the unit matrices
of size $n,r+1$ respectively.
\end{definition}

Note that weak $B$-liftings $A$ always exist.
Given such $A$, consider the following 
homomorphism of Laurent polynomial rings:
\begin{align*}
\psi_{A} \colon
\KK[T_0^{\pm 1}, \ldots, T_{r}^{\pm 1}]
\ &\to \ 
\KK[T_0^{\pm 1}, \ldots, T_{r}^{\pm 1},S_1^{\pm 1},\ldots, S_l^{\pm 1}],
\\
\sum \alpha_\nu T^\nu 
\ &\mapsto \ 
\sum \alpha_\nu T^\nu S^{A^t \cdot \nu}. 
\end{align*}
Set $K_1 := \ZZ^{r+1} / P^*(\ZZ^n)$. Then the 
left hand side algebra is $K_1$-graded 
by assigning to the $i$-th 
variable the class of $e_i$ in $K_1$.

\begin{lemma}
\label{lem:befreiung}
In the above notation, let
$g_1 \in \KK[T_0^{\pm 1}, \ldots, T_{r}^{\pm 1}]$ 
be a $K_1$-homogeneous polynomial.
\begin{enumerate}
\item
We have $T^\nu S^\mu  \psi_A(g_1) = g_2'$ with
 $\nu \in \ZZ^{r+1}$, $\mu \in \ZZ^l$ and 
a unique monomial free
$g_2' \in \KK[T_0, \ldots, T_{r},S_1,\ldots, S_l]$.
\item
The polynomial $g_2'$ is of the form 
$g_2' = g_2(T_0,\ldots,T_{r+1},S_1^{m_1},\ldots,S_l^{m_1})$
with a 
$g_2 \in \KK[T_0, \ldots, T_{r},S_1,\ldots, S_l]$
not depending on the choice of $A$.
\item 
If, in the setting of Construction~\ref{constr:shiftideal},
we have $I_1 = \bangle{g_1}$, then the transferred ideal 
is given by $I_2 = \bangle{g_2}$.
\item
The variable $T_i$ defines a prime element in 
$\KK[T_0, \ldots, T_{r+l+1}] / \bangle{g_2}$ 
if and only if the polynomial 
$g_2(T_1, \ldots, T_{i-1},0,T_{i+1}, \ldots, T_{r + l +1})$ 
is irreducible.
\end{enumerate}
\end{lemma}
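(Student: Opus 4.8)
The plan is to establish the four assertions in turn. Write $g_1 = \sum_\nu \alpha_\nu T^\nu$; by $K_1$-homogeneity all exponents $\nu$ lie in a single coset $\nu_0 + P^*(\ZZ^n)$, so that $\psi_A(g_1) = \sum_\nu \alpha_\nu T^\nu S^{A^t \nu}$. For~(i) I would multiply $\psi_A(g_1)$ by a Laurent monomial $T^\nu S^\mu$ clearing all negative exponents and then dividing out the largest common monomial factor; this produces a monomial free polynomial $g_2'$. Uniqueness follows because two monomial free polynomials differing by a Laurent monomial must in fact coincide: a nontrivial monomial factor would either introduce a negative exponent or force some variable to divide the product.

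The key input for~(ii) and~(iii) is that the commutative diagram of Definition~\ref{def:weaklift} is equivalent to the single matrix identity $PA = BM$, where $M = \mathrm{diag}(m_1,\ldots,m_l)$; throughout I write $(\,\cdot\,)^t$ for the transpose, so that the dual maps denoted $P^*$ in the exact sequences are $P^t$. For~(ii), given two exponents $\nu,\nu'$ of $g_1$ write $\nu - \nu' = P^t w$; here $w$ is unique since $P^t$ is injective (the columns of $P$ generate $\QQ^n$). Then $A^t(\nu - \nu') = (PA)^t w = M B^t w$, so the $j$-th $S$-exponents satisfy $(A^t\nu)_j - (A^t\nu')_j = m_j \bangle{b_j, w}$, where $b_j$ is the $j$-th column of $B$. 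Hence all $S_j$-exponents of $\psi_A(g_1)$ are congruent modulo $m_j$, and after the monomial free normalization of~(i) they become divisible by $m_j$; this yields $g_2' = g_2(T_0,\ldots,T_r,S_1^{m_1},\ldots,S_l^{m_l})$. Moreover the relative $S_j$-exponents of $g_2$ equal $\bangle{b_j, w}$, which depends only on $P$ and $B$, whence $g_2$ does not depend on $A$.

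For~(iii) I would unwind Construction~\ref{constr:shiftideal}. The extension $I_1' = \bangle{g_1}$ in the Laurent ring has degree zero part generated, over the degree zero subring, by $T^{-\nu_0} g_1$; writing $\nu - \nu_0 = P^t a_\nu$, its preimage under $\pi_1^*$ is $h_0 = \sum_\nu \alpha_\nu S^{a_\nu}$. Applying $\pi_2^*$, which is induced by $[P,B]^t\colon a \mapsto (P^t a, B^t a)$, gives the generator $\pi_2^*(h_0) = \sum_\nu \alpha_\nu T^{\nu - \nu_0} S^{B^t a_\nu}$ of the ideal $I_2'$. By the description in~(ii) (with $w = a_\nu - a_{\nu'}$) this polynomial has the same monomials as $g_2$ up to a single Laurent monomial shift, so $I_2' = \bangle{g_2}$ in the Laurent ring. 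Finally, since $g_2$ is monomial free it is coprime, in the factorial ring $\KK[T_0,\ldots,T_r,S_1,\ldots,S_l]$, to each variable, and therefore the contraction $I_2 = I_2' \cap \KK[T_0,\ldots,T_r,S_1,\ldots,S_l]$ equals $\bangle{g_2}$.

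For~(iv) set $R_2 := \KK[T_0,\ldots,T_r,S_1,\ldots,S_l]/\bangle{g_2}$ and let $x$ denote the variable under consideration. Since $g_2$ is monomial free it contains a term with $x$-exponent zero, so $g_2|_{x=0}$ is a nonzero polynomial and $x$ is a nonzero nonunit in $R_2$. As $R_2/\bangle{x}$ is isomorphic to the quotient of the polynomial ring in the remaining variables by the ideal generated by $g_2|_{x=0}$, the element $x$ is prime in $R_2$ exactly when this quotient is an integral domain, that is, when $g_2|_{x=0}$ generates a prime ideal; in a factorial ring a nonzero principal ideal is prime precisely when its generator is irreducible. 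I expect the main obstacle to be part~(iii): matching the abstractly transferred ideal of Construction~\ref{constr:shiftideal} with the explicit $\bangle{g_2}$ while tracking the factor $M$ coming from $PA = BM$, and justifying the contraction step via the monomial freeness of $g_2$.
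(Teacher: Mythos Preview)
Your proof is correct and follows essentially the same approach as the paper. The paper's argument is terser: it observes that the dualized diagram of Definition~\ref{def:weaklift} yields, on group algebras, the identity $\psi_M \circ \pi_2^* = \psi_A \circ \pi_1^*$ (where $\psi_M$ is the substitution $S_j \mapsto S_j^{m_j}$), then shifts $g_1$ by a monomial $T^{-\kappa}$ into $K_1$-degree zero so that it becomes a pullback $\pi_1^*(h)$, and reads off~(i) and~(ii) from the commutativity; parts~(iii) and~(iv) are simply declared ``clear''. Your argument unpacks exactly this: the matrix identity $PA = BM$ you use is the content of the commutative diagram, your computation of the relative $S_j$-exponents $\langle b_j, w\rangle$ is the explicit form of the diagram chase, and you supply the details for~(iii) (generator of the degree-zero part, pushforward via $\pi_2^*$, contraction via monomial-freeness) and~(iv) that the paper omits. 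One small remark: you justify injectivity of $P^t$ by ``the columns of $P$ generate $\QQ^n$''; in the paper's setup this injectivity is built into the exact sequence $0 \leftarrow K_1 \leftarrow \ZZ^{r+1} \xleftarrow{P^*} \ZZ^n \leftarrow 0$, so no extra hypothesis is needed.
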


\begin{proof}
Consider the commutative diagram of group algebras 
corresponding to the dualized diagram~\ref{def:weaklift}.
There, $\psi_A$ occurs as the homomorphism of group 
algebras defined by the transpose $[E_{r+1},A]^*$.
Let $T^\kappa$ be any monomial of $g_1$.
Then $g_1' := T^{-\kappa}g_1$ gives rise to the 
same $g_2$, but $g_1'$ is of $K_1$-degree zero and
hence a pullback $g_1' = \psi_{P^*}(h)$.
The latter allows to use commutativity of the diagram
which gives~(i) and (ii). Assertions~(iii) and~(iv)
are clear.
\end{proof}

\begin{proof}[Proof of Theorem~\ref{mainthm2}]
Recall that we consider the quadric 
$X = V(g_1) \subseteq \PP_r$ with 
$g_1 = T_0T_1 + \ldots + T_{r-1}T_r$, 
where we replace the last term with 
$T_r^2$ in the case of an even $r$,
and a $\KK^*$-action on $\PP_r$, 
given by weights $\zeta_0, \ldots, \zeta_r$
such that $g_0$ is of degree zero and, 
in particular, $X$ is invariant. 

In a first step, we construct a suitable 
GIT quotient $X_1$ of the $\KK^*$-action on~$X$.
Lifting the above data to $\KK^{r+1}$ gives
$\bar{X} := V(g_1) \subseteq \KK^{r+1}$ 
which is invariant under the action of 
$\TT^2 = \KK^* \times \KK^*$ on $\KK^{r+1}$
given by the weight matrix
\begin{eqnarray*}
Q 
& := & 
\left[
\begin{array}{ccc}
\zeta_0 & \ldots & \zeta_r
\\
1 & \ldots & 1
\end{array}  
\right]
\end{eqnarray*}
Consider the weight $w = (0,1)$ of $\TT^2$ and the 
associated set of semistable points 
$\hat{Z}_1 \subseteq \KK^{r+1}$, that means the union 
of all localizations $\KK^{r+1}_f$, where $f$ is 
homogeneous with respect to some positive multiple 
of $w$. Then $\hat{Z}_1$ is 
a toric open subset, and with 
$\hat{X}_1 := \bar{X} \cap \hat{Z}_1$
we obtain a commutative diagram
$$ 
\xymatrix{
{\hat{X}_1}
\ar@{}[r]|\subseteq
\ar[d]_{\quot \TT^2}
&
{\hat{Z}_1}
\ar[d]^{\quot \TT^2}
\\
X_1
\ar[r]
&
Z_1
}
$$
where the induced map $X_1 \to Z_1$ of quotients 
is a closed embedding.
We are in the setting presented before
Corollary~\ref{cor:ambientblowup}.
In particular, $\hat{Z}_1 \to Z_1$ is a toric Cox 
construction with a Gale dual $P$ of $Q$
as describing matrix;
note that the columns of $P$ generate $\ZZ^{r-1}$ as 
a lattice.
Moreover, the Cox ring of $X_1$ is the $\ZZ^2$-graded 
ring
\begin{eqnarray*}
R_1 
& = & 
\KK[T_0, \ldots, T_r] \ / \ \bangle{g_1}.
\end{eqnarray*}
Observe that $X_1$ is as well the $\KK^*$-quotient 
of the image of $\hat{X}_1$ in $X$ which in turn
is the set of semistable points of a suitable 
linearization of $\mathcal{O}(1)$.

Set $n := r-1$ and consider the Gelfand-Kapranov-Zelevinsky 
decomposition~$\Sigma$ associated to $P$.
Then, according to Proposition~\ref{prop:chow2GIT}, the toric 
variety $Z_2$ determined by $\Sigma$ is the normalized 
Chow quotient of the $\KK^*$-action on $\PP_{r}$. 
Moreover, let $X_2 \subseteq Z_2$ denote the proper transform 
of $X_1 \subseteq Z_1$ under the toric morphism $Z_2 \to Z_1$.
Then Proposition~\ref{thm:chow2GIT} tells us that 
$X_2$ and the Chow quotient $X \quotchow \KK^*$ share the 
same normalization. 

We will now show that $X_2$ is in fact normal and that 
its Cox ring is as claimed in the Theorem.
As before, put the primitive generators $b_1, \ldots, b_l$ 
of rays of $\Sigma$ differing from columns of $P$ 
into a matrix $B$ and choose a weak $B$-lifting $A$
with respect to~$P$;
using the fact that the columns of $P$ generate $\ZZ^n$,
we can choose the numbers $m_j$ all equal to one.
With the shifted row sums 
$\eta_0, \eta_2, \ldots, \eta_{r-1}$
we set
\begin{eqnarray*}
g_2
& := &
\begin{cases} 
T_0T_1S^{\eta_0} 
+
T_2T_3S^{\eta_2} 
+ \ldots + 
T_{r-1}T_r S^{\eta_{r-1}},
&
r \text{ odd},
\\
T_0T_1S^{\eta_0} 
+  \ldots + 
T_{r-2}T_{r-1}S^{\eta_{r-2}} 
+
T_r^2 S^{\eta_{r}},
&
r \text{ even.}
\end{cases}
\end{eqnarray*}
Lemma~\ref{lem:befreiung} then ensures that 
$I_2 := \bangle{g_2}$ is 
the transferred ideal of $I_1 := \bangle{g_1}$ in the 
sense of Construction~\ref{constr:shiftideal}; define 
$P_1 := P$ and $P_2 := [P,B]$ to adapt the settings.
Consider the ring 
\begin{eqnarray*}
R_2
& = & 
\KK[T_0,\ldots,T_r,S_1,\ldots, S_l] \ / \ \bangle{g_2}.
\end{eqnarray*}
Our task is to show that the variables $S_1, \ldots, S_l$ 
define prime elements in $R_2$. Then 
Proposition~\ref{prop:K1primcrit} tells us that $R_2$ and 
thus $X_2$ are normal and Corollary~\ref{cor:ambientblowup}
yields that the Cox ring of $X_2$ is $R_2$ together with 
the $\ZZ^{2+l}$-grading defined by a Gale dual $Q_2$ of 
$P_2 = [P,B]$.

Suitably renumbering the variables $T_i$, we achieve 
that  $|\zeta_{r-3}|,\ldots,|\zeta_r|$ 
are minimal among all $|\zeta_i|$ in the case of odd $r$
and, similarly, in the case of even $r$, we have 
$\zeta_{r-3} = \zeta_{r-2} = \zeta_{r-1} = 0$.
In order to see that the $S_j$ define primes,
it suffices to show that, according to odd and even $r$,
$$
g_2
\ = \ 
T_{r-3}T_{r-2}+T_{r-1}T_r+h,
\quad
\text{ or } 
\quad
g_2
\ = \ 
T_{r-2}T_{r-1}+T_{r}^2+h,
$$
holds with a polynomial $h \in \KK[T_0,\ldots,T_r,S_1,\ldots, S_l]$ 
not depending on the last four (three) $T_i$,
see Lemma~\ref{lem:befreiung}~(iv).
This in turn is seen by constructing a suitable 
weak $B$-lifting via the description of the rays 
through $b_1,\ldots, b_l$ provided by 
Proposition~\ref{cor:gkzrays}.
Each $b_j$ (or a suitable integral multiple) stems 
from a $Q$-hyperplane and the $u_j$ can be chosen 
to be nonpositive on the last four (three) $q_i$. 
Putting $\max(0,u_j(q_i))$ into 
a matrix $A'$ gives a weak $B$-lifting $A'$ with 
$A'_{i*}=0$ for the last four (three) rows. 
By Lemma~\ref{lem:befreiung}, the weak
$B$-lifting $A'$ yields the same $g_2$
which now has the desired form.
\end{proof}

\begin{example}
Consider the quadric 
$X = V(T_0T_1+ T_2T_3 + T_4T_5+ T_6^2)\subseteq \PP_6$ 
and the action of $\KK^*$ on $\PP_6$ given by
\begin{eqnarray*}
 t\cdot [x_0,\ldots,x_6]
& := &
[t^{-2}x_0,t^{2}x_1,t^{-1}x_2,t^{1}x_3,x_4,x_5,x_6].
\end{eqnarray*}
An integral Gale dual $P$ of the extended weight 
matrix $Q$ is of size $5 \times 7$ and explicitly 
given as
$$
\left[
\begin{array}{rrrrrrr}
-1 & -1 & 1 & 1 & 0 & 0 & 0\\
0 & 0 & 0 & 0 & -1 & 1 & 0\\
0 & -1 & -1 & 1 & 0 & 1 & 0\\
0 & 0 & 1 & 1 & -1 & -1 & 0\\
0 & 0 & 0 & 0 & -1 & 0 & 1
\end{array}
\right].
$$
Computing the associated 
Gelfand-Kapranov-Zelevinsky decomposition 
we see that it comes with one new ray,
namely
$$ 
b_1 
\ = \
(-1,0,-1,1,0)
\ = \
2p_0 + p_2,
$$
where $p_0, \ldots, p_6$ are the columns of $P$.
The Cox ring of the normalized Chow quotient 
$X \nquotchow \KK^*$ is the ring
\begin{eqnarray*}
\mathcal{R}(X \nquotchow \KK^*)
&  = & 
\KK[T_0,\ldots,T_6,S_1]
\ / \ 
\bangle{T_0T_1S_1^2  + T_2T_3S_1  + T_4T_5  + T_6^2}
\end{eqnarray*}
together with the grading by $\Cl(X) = \ZZ^3$
via a Gale dual of $[p_0,\ldots, p_6,b_1]$,
i.e. the  degrees of the variable are the 
columns of
$$
\left[
\begin{array}{rrrrrrrr}
-2 & 2 & -1 & 1 & 0 & 0 & 0 & 0
\\
1 & 1 & 1 & 1 & 1 & 1 & 1 & 0
\\
2 & 0 & 1 & 0 & 0 & 0 & 0 & -1
\end{array} 
\right].
$$
\end{example}

\begin{remark}
The setting of Theorem~\ref{mainthm2} can also be 
interpreted in terms of Mori theory.
There are (up to isomorphism) finitely many
normal projective varieties $Y_1, \ldots, Y_s$ 
sharing as their Cox ring a given 
$R_1 = \KK[T_0,\ldots,T_n] / \bangle{g_1}$ 
with its $\ZZ^2$-grading coming from
the extended weight matrix~$Q$. 
Each $Y_i$ is a GIT-quotient of the induced 
$\KK^*$-action on the quadric 
$X = V(g_1) \subseteq \PP_r$ 
and thus dominated in universal manner by 
the normalized Chow quotient $Y = X \nquotchow \KK^*$.
Thus, $Y$ is the ``Mori master space'' controlling
the whole class of small birational relatives $Y_i$. 
This picture obviously extends to all Mori dream 
spaces, and it is a natural desire to study the
geometry of the Mori master spaces.
\end{remark}

\section{Proof of Theorem~\ref{mainthm1}}
\label{sec:quadrics2}

The main idea of the proof is to consider instead 
of the Chow quotient its ``weak tropical resolution''
and to use intrinsic symmetry of the latter space.
This approach applies also to problems beyond 
$\KK^*$-actions on quadrics; we therefore develop 
it in sufficient generality.
We begin with recalling the necessary concepts from 
tropical geometry.

Let $f$ be a Laurent polynomial in $n$ variables.
The Newton polytope $B_f \subseteq \QQ^n$ 
is the convex hull over the exponent vectors of $f$. 
The tropical variety $\trop(V(f))$
of the zero set $V(f) \subseteq \TT^n$ lives
in $\QQ^n$ and is defined to be the union 
of all $(n-1)$-dimensional cones of the normal fan 
of~$B_f$.
The tropical variety of an arbitrary closed subset 
$Y \subseteq \TT_n$ is the intersection
$\trop(Y)$ over all $\trop(V(f))$, where 
$f$ runs through the ideal of $Y$.
It turns out that $\trop(Y)$ is the support 
of an (in general not unique and not pointed) 
fan in $\QQ^n$.

\begin{definition}
Consider a toric variety $Z$ defined by a fan
$\Sigma$ in $\QQ^n$ and an irreducible
subvariety $Y \subseteq Z$ intersecting the 
big torus $\TT^n \subseteq Z$ nontrivially.
We call the embedding $Y \subseteq Z$ 
{\em weakly tropical\/} if the support 
$\vert \Sigma \vert \subseteq \QQ^n$ 
equals the tropical variety 
$\trop(Y\cap \TT^n) \subseteq \QQ^n$. 
\end{definition}

\begin{remark}
Any tropical embedding in the sense of Tevelev~\cite{Tev}
is weakly tropical.
If $Y \subseteq Z$ is a weakly tropical subvariety 
of a toric variety $Z$,
then, by~\cite[Sec.~14]{Gu}, 
for any toric orbit $\TT^n \mal z \subseteq Z$
intersecting $Y$ nontrivially, we have 
\begin{eqnarray*}
\dim(Z) \  - \ \dim(\TT^ n \mal z)
& = & 
\dim(Y) \ - \ \dim(\TT^ n \mal z \cap Y).
\end{eqnarray*}
\end{remark}

\begin{construction}[Weak tropical resolution]
\label{constr:tropresgen}
Let $Z$ be a complete toric variety arising from 
a fan $\Sigma$ in $\QQ^n$ and
$Y \subseteq Z$ an irreducible subvariety
intersecting the big torus $\TT^n \subseteq Z$
nontrivially.
Fix a fan structure $\Sigma_Y$ carried on the 
tropical variety $\trop(Y \cap \TT^n) \subseteq \QQ^n$ 
for $Y \cap \TT^n$ and consider the coarsest 
common refinement
$$
\Sigma' 
\quad := \quad
\Sigma
\ \sqcap \
\Sigma_Y
\quad = \quad
\{\tau \cap \sigma; \; 
\sigma \in \Sigma, \
\tau \in \Sigma_Y\}
$$
of the fans $\Sigma$ and $\Sigma_Y$.
Then the canonical map of fans $\Sigma' \to \Sigma$ 
defines a birational toric morphism $Z' \to Z$ 
of the associated toric varieties.
With the proper transform $Y' \subseteq Z'$ of 
$Y \subseteq Z$, we obtain a proper birational map 
$Y' \to Y$ which we call a {\em weak tropical 
resolution\/} of $Y \subseteq Z$. 
\end{construction}

\begin{proof}
The only thing to show is properness of the 
morphism $Y' \to Y$.
But this follows directly from Tevelev's 
criterion~\cite[Prop.~2.3]{Tev}.
\end{proof}

The use of passing to the weak tropical resolution 
in our context is that it enables us to divide out 
torus symmetries in a controlled manner.
This leads to an explicit version of~\cite[Thm.~1.2]{HaSu}
relating the Mori dream space property of a variety 
to the Mori dream space property of a certain quotient.

\begin{construction}
\label{constr:nullquot}
Consider a toric variety $Z$ arising 
from a fan $\Sigma$ in $\QQ^r$,
and a weakly tropical embedded 
subvariety $Y \subseteq Z$.
Suppose that  $Y$ is invariant under 
the action of a subtorus $T \subseteq \TT^r$.
Set
$$ 
Z_0 
\ := \ 
\{
z \in Z; \ 
\dim (\TT^r \mal z) \ge r-1, \ 
T_z \text{ finite}
\},
\qquad 
\qquad 
Y_0 
\ := \ 
Y \cap Z_0.
$$
Then $Z_0 \subseteq Z$ is an open toric subset
corresponding to a subfan $\Sigma_0 \preceq \Sigma$ 
with certain rays $\varrho_1, \ldots, \varrho_s$ of 
$\Sigma$ as its maximal cones.
Let the matrix $P \in \Mat(n,r;\ZZ)$ describe an 
epimorphism $\pi \colon \TT^r \to \TT^n$ with 
$\ker(\pi) = T$ and consider the following fan in $\ZZ^n$:
$$ 
\Delta_0 
\ := \ 
\{0,P(\varrho_1), \ldots, P(\varrho_s)\}.
$$
Note that $\varrho_1, \ldots, \varrho_s$
are precisely the rays of $\Sigma$ which are not 
contained in $\ker(P)$.
The matrix $P$ determines a toric morphism 
$Z_0 \to Z \quotnull T$ onto the toric variety 
associated to $\Delta_0$.
We define $Y \quotnull T \subseteq Z \quotnull T$ 
to be the closure of the image $\pi(Y \cap \TT^r)$.
\end{construction}

\begin{remark}
The tropical variety 
$\trop(Y \quotnull T \cap \TT^n)$ contains all rays 
$P(\varrho_1), \ldots, P(\varrho_s)$ of 
the fan $\Delta_0$.
If there is a fan $\Delta$ in $\ZZ^n$ having  
$\trop(Y \quotnull T \cap \TT^n)$ as its support
and $P(\varrho_1), \ldots, P(\varrho_s)$ as its rays, 
then $Y \quotnull T$ admits a weakly tropical 
completion with boundary of codimension at least two.
\end{remark}

\begin{proposition}
\label{prop:tropembmdschar}
Consider a toric variety $Z$, a 
weakly tropical subvariety 
$Y \subseteq Z$ and suppose 
that $Y$ is invariant under 
the action of a subtorus 
$T \subseteq \TT^r$. 
Then the following statements are 
equivalent.
\begin{enumerate}
\item
The normalization of $Y$ has finitely generated Cox ring.
\item
The normalization of $Y \quotnull T$ has finitely generated 
Cox ring.
\end{enumerate}
\end{proposition}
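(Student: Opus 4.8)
The plan is to present the rational map $Y \dashrightarrow Y \quotnull T$ as the quotient of the $T$-variety $Y$ and to reduce the assertion to the equivalence \cite[Thm.~1.2]{HaSu}, of which this proposition is meant to be an explicit incarnation. Since Cox rings are only defined for normal varieties, I first replace $Y$ and $Y \quotnull T$ by their normalizations; the toric quotient construction of Construction~\ref{constr:nullquot} is compatible with this. Because $T \subseteq \TT^r$ acts freely on the dense open set $Y \cap \TT^r$, the generic $T$-orbit in $Y$ is $\dim(T)$-dimensional, the morphism $Y_0 \to Y \quotnull T$ restricts on the big torus to the geometric quotient $Y \cap \TT^r \to \pi(Y \cap \TT^r)$, and $Y \quotnull T$ has dimension $\dim(Y) - \dim(T)$.

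Next I would extract the precise codimension information from the weakly tropical hypothesis. For every orbit $\TT^r \mal z$ meeting $Y$ the dimension formula gives $\codim_Z(\TT^r \mal z) = \codim_Y(\TT^r \mal z \cap Y)$. Applied to orbits of codimension at least two, it shows that the corresponding part of $Y \setminus Y_0$ has codimension at least two in $Y$; the only codimension-one contributions to $Y \setminus Y_0$ are the prime divisors $Y \cap \orb(\varrho)$ attached to the rays $\varrho \subseteq \ker(P)$. These are exactly the divisors contracted by $\pi$, whereas the rays $\varrho_i \not\subseteq \ker(P)$ produce the prime divisors $Y \cap \orb(\varrho_i)$ dominating the toric divisors of $Y \quotnull T$ that correspond to the rays $P(\varrho_i)$ of $\Delta_0$. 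On $Z_0$ the torus $T$ acts with finite stabilizers, so over this locus $Y_0 \to Y \quotnull T$ is a good quotient, geometric away from the contracted divisors.

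With this quotient presentation in place I would invoke \cite[Thm.~1.2]{HaSu}: its description of the Cox ring of a variety with torus action through a polyhedral divisor on the quotient sends the horizontal prime divisors $Y \cap \orb(\varrho_i)$ to the prime divisors of $Y \quotnull T$ and absorbs the vertical divisors $Y \cap \orb(\varrho)$, with $\varrho \subseteq \ker(P)$, into the polyhedral coefficients. The theorem then yields directly that $\mathcal{R}$ of the normalization of $Y$ is finitely generated if and only if $\mathcal{R}$ of the normalization of $Y \quotnull T$ is; since \cite[Thm.~1.2]{HaSu} is an equivalence, both implications (i)$\Leftrightarrow$(ii) are obtained at once.

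The delicate point, and the step I expect to cost the most work, is to match the \emph{explicit} toric quotient $Y \quotnull T \subseteq Z \quotnull T$ of Construction~\ref{constr:nullquot} with the \emph{intrinsic} quotient underlying \cite[Thm.~1.2]{HaSu}, and to ensure that the genuine codimension-one locus removed in passing from $Y$ to $Y_0$ does not falsify the comparison of Cox rings. Here the weakly tropical hypothesis does the decisive work: it pins the divisorial part of $Y \quotnull T$ to the rays $P(\varrho_1), \ldots, P(\varrho_s)$ of the fan $\Delta_0$ and, as noted in the remark preceding the proposition, forces the remaining boundary of $Y \quotnull T$ into codimension at least two, so that the Cox ring of the normalization of $Y \quotnull T$ is computed on the quotient-torus locus $\pi(Y_0)$ and really is the Cox ring entering \cite[Thm.~1.2]{HaSu}. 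Once these identifications are fixed the equivalence of (i) and (ii) is immediate.
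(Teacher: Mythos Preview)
Your overall strategy --- reduce to \cite[Thm.~1.2]{HaSu} and use the weakly tropical hypothesis to match the explicit quotient $Y \quotnull T$ with the intrinsic one --- is exactly the paper's approach. The paper's execution is, however, considerably more direct. It introduces the open $T$-invariant set $W \subseteq Y$ of points with finite $T$-isotropy; this is precisely the set whose (normalized, separated) quotient by $T$ is the intrinsic quotient entering \cite[Thm.~1.2]{HaSu}. The codimension-one divisors you worry about, those coming from rays $\varrho \subseteq \ker(P)$, consist entirely of points with positive-dimensional $T$-stabilizer, so they lie in $Y \setminus W$ and simply do not appear in $W$. The weakly tropical hypothesis then says that $Y_0 \subseteq W$ has complement of codimension at least two \emph{in $W$} (not in $Y$); after normalizing, this forces the separations of $\tilde W/T$ and $\tilde Y_0/T$ to share the same Cox ring, and the latter is identified with the normalization of $Y \quotnull T$. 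The appeal to \cite[Thm.~1.2]{HaSu} then finishes.

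Two points in your write-up would not survive scrutiny as written. First, your appeal to the remark preceding the proposition is misplaced: that remark is conditional (``If there is a fan $\Delta$ \ldots'') and plays no role in the proof; the codimension-two control happens upstairs in $W$, not in the boundary of $Y \quotnull T$. Second, the codimension-one divisors from rays in $\ker(P)$ are neither ``contracted by $\pi$'' nor ``absorbed into polyhedral coefficients'' --- they are outside the finite-isotropy locus $W$ from the start, hence irrelevant to the quotient comparison. Once you replace these two steps by the single observation about $W$, your argument becomes the paper's.
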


\begin{proof}
Let $\nu\colon\tilde Y\to Y$ be the normalization map. 
By $W \subseteq Y$ we denote the open $T$-invariant 
subset consisting of all points $y \in Y$ having 
a finite isotropy group $T_y$.
The fact that $Y \subseteq Z$ is tropically embedded
ensures that $Y_0 \subseteq W$ has a complement of 
codimension at least two in $W$. 
This property is preserved when passing to the 
respective normalizations $\tilde W:=\nu^{-1}(W)$ 
and $\tilde Y_0:=\nu^{-1}(Y_0)$. 
In particular the separations in the sense 
of~\cite[p.~978]{HaSu} of the corresponding quotients 
$\tilde W/T$ and $\tilde Y_0/T$ have the same 
Cox rings.
Since normalizing commutes with taking quotients and 
separating, the latter space is isomorphic to the 
normalization of ${Y\quotnull T}$.
Thus the assertion follows from~\cite[Theorem 1.2]{HaSu}.
\end{proof}

\begin{proposition}
\label{prop:mdscrit}
Let $Z$ be a toric variety, $Y \subseteq Z$ 
a complete subvariety which is invariant 
under a subtorus $T$ of the big torus of $Z$
and  $Y' \to Y$ be a weak tropical resolution.
If the normalization of $Y' \quotnull T$ has 
finitely generated Cox ring, then the
normalization $\tilde Y$ of $Y$ is a Mori 
dream space.
\end{proposition}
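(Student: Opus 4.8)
The plan is to transport the problem to the weakly tropical model $Y' \subseteq Z'$, apply Proposition~\ref{prop:tropembmdschar} there, and then push the Mori dream space property down along the proper birational contraction $Y' \to Y$. The first thing to verify is that $Y' \subseteq Z'$ is again weakly tropical and $T$-invariant. Weak tropicality is built into Construction~\ref{constr:tropresgen}: the fan $\Sigma'$ of $Z'$ is carried on $\trop(Y \cap \TT^n)$, so $\vert \Sigma' \vert = \trop(Y \cap \TT^n)$, and since passing to the proper transform leaves the torus part unchanged we have $Y' \cap \TT^n = Y \cap \TT^n$ and hence $\vert \Sigma' \vert = \trop(Y' \cap \TT^n)$. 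For invariance, note that the modification $Z' \to Z$ does not alter the big torus $\TT^n$, and $Y' \cap \TT^n = Y \cap \TT^n$ is $T$-invariant because $Y$ is; taking closures in $Z'$ shows that $Y'$ is $T$-invariant. In particular the quotient $Y' \quotnull T$ of Construction~\ref{constr:nullquot} is defined.

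Next I would apply Proposition~\ref{prop:tropembmdschar} to the weakly tropical, $T$-invariant subvariety $Y' \subseteq Z'$. The hypothesis of the present statement is exactly part~(ii) of that proposition, namely that the normalization of $Y' \quotnull T$ has finitely generated Cox ring; the equivalence then yields part~(i), that the normalization $\tilde{Y'}$ of $Y'$ has finitely generated Cox ring, so that $\tilde{Y'}$ is a Mori dream space.

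It remains to descend to $Y$. The weak tropical resolution is a proper birational morphism $Y' \to Y$, and by the universal property of normalization it lifts to a proper birational morphism $\pi \colon \tilde{Y'} \to \tilde Y$. The crux of the argument, which I expect to be the main obstacle, is to conclude that $\tilde Y$ inherits the Mori dream space property from $\tilde{Y'}$. For this I would invoke the fact that a proper birational image of a Mori dream space is again a Mori dream space: the prime divisors $E_1, \ldots, E_k$ contracted by $\pi$ generate the kernel of the surjection $\Cl(\tilde{Y'}) \to \Cl(\tilde Y)$, so that $\Cl(\tilde Y)$ stays finitely generated, while finite generation of $\mathcal{R}(\tilde Y)$ follows from that of $\mathcal{R}(\tilde{Y'})$ by the standard behaviour of Cox rings under divisorial contractions, equivalently by restricting the finite rational polyhedral Mori chamber decomposition of $\tilde{Y'}$ to the subspace $\pi^* N^1(\tilde Y) \subseteq N^1(\tilde{Y'})$. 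Hence $\tilde Y$ is a Mori dream space, as claimed.
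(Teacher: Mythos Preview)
Your proof is correct and follows the same two-step strategy as the paper: apply Proposition~\ref{prop:tropembmdschar} to conclude that the normalization of $Y'$ has finitely generated Cox ring, then push this down along the proper birational map to $\tilde Y$. For the descent step the paper invokes Okawa's general result on images of Mori dream spaces (or gives a short direct argument via pushforward of a sheaf of divisorial algebras over the regular locus), whereas your justification via restricting the Mori chamber decomposition to $\pi^* N^1(\tilde Y)$ tacitly assumes projectivity and $\QQ$-factoriality; since the hypothesis only gives completeness, the paper's route is the safer one here.
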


\begin{proof}
Since the normalization of $Y' \quotnull T$ has finitely 
generated Cox ring, Proposition~\ref{prop:tropembmdschar} 
shows that the normalization $Y''$ of $Y'$ has finitely 
generated Cox ring and thus is a Mori dream space. 
The canonical morphism $\pi \colon Y'' \to \tilde Y$ 
is proper and birational.
In order to see that $\tilde Y$ is a Mori dream space,
we may apply the general~\cite[Thm.~10.4]{Ok},
or look at a suitable sheaf 
$\mathcal{S} = \oplus_K \mathcal{O}_{\tilde Y}(D)$ 
of divisorial algebras on $\tilde Y$ mapping onto 
the Cox sheaf $\mathcal{R}$ of $\tilde Y$. 
By properness of $\pi$, we obtain 
$\mathcal{S} = \pi_* \mathcal{S}''$ over 
the set $W \subseteq \tilde Y$ of regular points for
$\mathcal{S}'' = \oplus_{K} \mathcal{O}_{Y''}(\pi^*(D))$.
Since $Y''$ is a Mori dream space, 
$\Gamma(\pi^{-1}(W),\mathcal{S}'')$
is finitely generated.
This implies finite generation of the Cox ring
$\mathcal{R}(\tilde Y) = \Gamma(W,\mathcal{R})$.
\end{proof}

A second preparation of the proof of Theorem~\ref{mainthm1} 
concerns toric ambient modification.
We will always write $e_1, \ldots, e_n \in \ZZ^n$ 
for the canonical basis vectors and set 
$e_0 := -e_1  - \ldots - e_n$.
Moreover, we denote by $\Delta(n)$ the fan in $\ZZ^n$
consisting of all cones spanned by at most $n$ of 
the vectors $e_0, \ldots, e_n$ and by 
$\Delta'(n) \subseteq \Delta(n)$ 
the subfan consisting of all cone of dimension at 
most $n-1$.

\begin{lemma}
\label{lem:deltafan}
Consider nonzero vectors $v_1, \ldots, v_l \in \QQ^n$ 
contained in a maximal cone $\tau \in \Delta(n)$,
a cone $\sigma  \subseteq \QQ^n$ generated 
by some of the vectors $e_0,\ldots,e_n,v_1,\ldots,v_l$ 
and a cone $\delta  \in \Delta'(n)$.
Suppose that $\varrho := \delta \cap \sigma$ 
is one-dimensional and $\varrho \not\in \Delta'(n)$.
Then~$\varrho$ is contained in some facet of 
$\tau$.
\end{lemma}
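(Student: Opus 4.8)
The plan is to work in the basis of $\QQ^n$ attached to $\tau$ and to reduce the statement to a short case analysis on how the apex index of $\tau$ interacts with $\delta$ and $\sigma$. First I would write $\tau = \cone(e_i; \, i \in J)$ with $J \subseteq \{0, \ldots, n\}$ of size $n$, let $j_0$ be the unique index outside $J$, and record the two elementary facts that $(e_i)_{i \in J}$ is a basis of $\QQ^n$ and that $e_{j_0} = -\sum_{i \in J} e_i$. Then I would fix index sets describing the two cones: $\delta = \cone(e_i; \, i \in I)$ with $|I| \le n-1$, and $\sigma = \cone(\{e_i; \, i \in S_0\} \cup \{v_k; \, k \in S_1\})$. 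Since every $v_k$ lies in $\tau$ by hypothesis, all generators of $\sigma$ lie in $\cone(e_i; \, i \in S_0 \cup J)$, which is the only way the vectors $v_k$ enter the argument.

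Next I would split according to whether $j_0 \in I$. If $j_0 \notin I$, then $I \subseteq J$ and $|I| \le n-1 < n = |J|$, so picking any $i^* \in J \setminus I$ gives $\varrho \subseteq \delta \subseteq \cone(e_i; \, i \in J \setminus \{i^*\})$, which is a facet of $\tau$, and we are done without even invoking $\sigma$. The remaining case $j_0 \in I$ is where the hypotheses on $\sigma$ and on $\varrho$ must be used, and I would split it once more according to whether $e_{j_0}$ occurs among the generators of $\sigma$, that is, whether $j_0 \in S_0$.

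The hard part, and the conceptual crux, is the subcase $j_0 \in I$ together with $j_0 \in S_0$: here $e_{j_0}$ is a generator of both $\delta$ and $\sigma$, so $\cone(e_{j_0}) \subseteq \delta \cap \sigma = \varrho$, and since $\varrho$ is one-dimensional this forces $\varrho = \cone(e_{j_0}) \in \Delta'(n)$, contradicting the hypothesis $\varrho \notin \Delta'(n)$; thus this configuration cannot occur, and this is precisely the point where the assumption $\varrho \notin \Delta'(n)$ is indispensable. In the complementary subcase $j_0 \notin S_0$ one has $S_0 \subseteq J$, hence $\sigma \subseteq \tau$, so any $x \in \varrho$ lies in $\tau$. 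Expanding $x = \sum_{i \in I} a_i e_i$ with $a_i \ge 0$ in the basis $(e_i)_{i \in J}$ via $e_{j_0} = -\sum_{i \in J} e_i$, the coordinates indexed by $J \setminus I$ all equal $-a_{j_0}$; since $|I \cap J| \le n-2$ this index set is nonempty, and $x \in \tau$ forces $a_{j_0} = 0$. Then $x \in \cone(e_i; \, i \in I \cap J)$ with $I \cap J \subsetneq J$, so $\varrho$ lies in a facet of $\tau$, completing the proof.
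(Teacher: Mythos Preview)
Your argument is correct. It takes a genuinely different route from the paper's proof. The paper argues via Gale duality: after normalising to $\tau = \cone(e_1,\ldots,e_n)$ it sets up the matrix $P = [e_0,\ldots,e_n,v_1,\ldots,v_l]$ together with an explicit Gale dual $Q$, lifts $\delta$ and $\sigma$ to faces $\gamma_1,\gamma_2$ of the positive orthant in $\QQ^{r+1}$, and produces an invariant separating linear form $f = Q^*(u)$ for them; analysing the sign pattern of $u$ on the columns $q_i$ (in particular $u(q_0) \ge u(q_1),\ldots,u(q_n)$, which comes from $v_k \in \tau$) then forces $e_0'$ into the positive half $\tau_1$, whence $\varrho \subseteq P(\tau_1^*) \subseteq \tau$. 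Your proof bypasses all of this machinery and works directly in the basis $(e_i)_{i\in J}$ with a short case split on whether the apex index $j_0$ lies in $I$ and in $S_0$. What your approach buys is self-containment and transparency: no dual orthant, no separating forms, and each hypothesis is visibly used exactly where it is needed (in particular the condition $\varrho \notin \Delta'(n)$ appears only to exclude the degenerate subcase $j_0 \in I \cap S_0$). You also make explicit the passage from $\varrho \subseteq \tau$ to $\varrho$ lying in a \emph{facet} of $\tau$, which the paper's write-up leaves to the reader at the very end. The paper's route, on the other hand, is consonant with the $\gamma$-collection and Gale-duality formalism already deployed in Proposition~\ref{cor:gkzrays}, so it integrates the lemma into the paper's existing toolkit rather than introducing an ad~hoc coordinate calculation.
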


\begin{proof}
We may assume that $\tau = \cone(e_1,\ldots, e_n)$ 
holds.
Replacing $\delta$ and $\sigma$ with suitable faces,
we may assume 
$\varrho^\circ = \delta^\circ \cap \sigma^\circ$.
The proof uses Gale duality and
we work in the notation of Section~\ref{sec:quadrics1}. 
Consider the matrix
$P := [e_0,\ldots, e_n,v_1,\ldots, v_l]$ 
and its Gale dual
\begin{eqnarray*}
Q
\ := \ 
[q_0,\ldots,q_{n+l}]
\ := \
\left[
\begin{array}{ccccccc}
0 & v_{11}&  \cdots &  v_{1n} & -1  &        & 0
\\
\vdots & \vdots &  &    \vdots     &     & \ddots & 
\\
0 & v_{l1} & \cdots & v_{ln} & 0 & \ & -1
\\
1 & 1 & \cdots &  1 &  0 & \cdots & 0
\end{array}
\right].
\end{eqnarray*}
Set $r := n+l$, let $e'_0, \ldots, e'_r$ denote the 
canonical basis vectors of $\ZZ^{r+1}$ and 
$\gamma := \QQ_{\ge 0}^{r+1}$ the positive orthant.
Then there are faces $\gamma_1,\gamma_2 \preceq \gamma$
such that for the corresponding dual faces 
$\gamma_i^*$ we have
$$ 
P(\gamma_1^*)  \  = \ \delta,
\qquad
P(\gamma_2^*) \ = \ \sigma,
\qquad
P(\gamma_1^*)^\circ \cap P(\gamma_2^*)^\circ \ \ne \ \emptyset.
$$
For some $n +1 \le j \le r$ we have $e'_j \in \gamma_2^*$ and 
we may assume that $\gamma_1^*$ is generated by at most
$n-1$ of the
vectors $e'_0, \ldots, e'_n$.
The latter implies $e'_{n+1}, \ldots, e'_{n+l} \in \gamma_1$.
Let $f = Q^*(u)$ be a separating linear 
form for $\gamma_1$ and $\gamma_2$. 
Then $f_{\vert \gamma_1} \ge 0$ implies
$$ 
u(q_{n+1}), \ldots, u(q_{n+l}) \ \ge \ 0,
\qquad
u(q_0) \ \ge \ u(q_1), \ldots, u(q_n). 
$$
Note that we must have $f(e'_j) = u(q_{j})> 0$, because $e'_j$ does 
not lie in $\gamma_2$.
Let $\tau_1, \tau_2 \preceq \gamma$ be the maximal 
faces with $f_{\vert \tau_1} \ge 0$ and 
$f_{\vert \tau_2} \le 0$. 
Then $f$ separates $\tau_1,\tau_2$ 
and $\tau_i^* \subseteq \gamma_i^*$ holds.
We conclude 
$$
\emptyset
\ \ne \ 
P(\tau_1^*)^\circ \cap P(\tau_2^*)^\circ 
\ \subseteq \ 
P(\tau_1^*) \cap  P(\tau_2^*)
\ \subseteq \
P(\gamma_1^*) \cap  P(\gamma_2^*)
\ = \ 
\varrho.
$$
Since $e'_j \not\in \tau_2$ holds, we obtain 
$\tau_2^* \ne \{0\}$ and thus  
$0 \not\in P(\tau_2^*)^\circ$.
Together with the displayed line this gives  
$P(\tau_1^*) \cap P(\tau_2^*) = \varrho$.
Since at least two of $e_0', \ldots, e_n'$
lie in $\gamma_1$, 
we obtain $e_0' \in \tau_1$ 
and thus 
$$
\varrho 
\ \subseteq \ 
P(\tau_1^*)
\ \subseteq \ 
\cone(e_1,\ldots, e_n).
$$
\end{proof}

\begin{lemma}
\label{lem:pnblowup}
For $n \in \ZZ_{\ge 1}$ consider $\Delta'(n)$ 
and let $b_1,\ldots, b_l \in \QQ^{n}$ be 
pairwise different primitive vectors lying on the 
support of $\Delta'(n)$ but not on its rays.
Denote by $\sigma_j \in \Delta'(n)$ the 
minimal cone with $b_j \in \sigma_j$ and 
write
$$ 
b_j
\ = \ 
a_{0j}e_0 + \ldots + a_{{n}j}e_{{n}},
\quad\text{where }
a_{ij} > 0 \text{ if } e_i \in \sigma_j, 
\
a_{ij} = 0 \text{ if } e_i \not\in \sigma_j.
$$
Then, for $P := [e_0,\ldots,e_{n}]$
and $B := [b_1,\ldots,b_l]$,
the matrix $A := (a_{ij})$ is a weak $B$-lifting 
with respect to $P$.
The lift of $h_1 = T_0 + \ldots + T_n$
in the sense of Lemma~\ref{lem:befreiung} is 
given by
\begin{eqnarray*}
h_2 
& = &
T_0S_1^{a_{01}} \cdots S_l^{a_{0l}}
\ + \ 
\ldots
\ + \ 
T_nS_1^{a_{n1}} \cdots S_l^{a_{nl}}.
\end{eqnarray*}
Moreover, the variables $T_0,\ldots T_n,S_1, \ldots,S_l$
define pairwise nonassociated prime elements in 
$\KK[T_0,\ldots T_n,S_1, \ldots,S_l]/\bangle{h_2}$ 
if and only if the vectors $b_1,\ldots, b_l$ 
lie in a common cone of $\Delta(n)$.  
\end{lemma}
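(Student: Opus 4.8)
The plan is to verify the three assertions in turn, treating the final equivalence as the real content. Throughout I write $\vertices(\sigma_j) := \{\, i : e_i \in \sigma_j \,\}$ for the index set of the vertices of the minimal cone, so that $a_{ij} > 0$ exactly for $i \in \vertices(\sigma_j)$ and $a_{ij} = 0$ otherwise. First I would check that $A = (a_{ij})$ is a weak $B$-lifting: by construction $\sum_{i=0}^n a_{ij} e_i = b_j$, i.e.\ $P \cdot A = B$, and since each $b_j \in \ZZ^n$ while the generators $\{e_i : i \in \vertices(\sigma_j)\}$ of $\sigma_j$ are linearly independent, reading off coordinates shows the $a_{ij}$ are integers; hence one may take all multipliers $m_j = 1$ in Definition~\ref{def:weaklift} and the diagram closes up. For $h_2$ I would apply the homomorphism $\psi_A$ of Lemma~\ref{lem:befreiung} to $h_1 = \sum_i T_i$: as the $i$-th row of $A$ is $(a_{i1},\dots,a_{il})$, one gets $\psi_A(h_1) = \sum_{i=0}^n T_i S_1^{a_{i1}}\cdots S_l^{a_{il}}$. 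All exponents are nonnegative, so this is already a polynomial, and since every $\sigma_j$ omits at least two of the $e_i$ (its dimension being at most $n-1$) each $S_j$ occurs with exponent $0$ in some term; thus $\psi_A(h_1)$ is monomial free and equals the lift $g_2 = h_2$ of Lemma~\ref{lem:befreiung}.

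For the equivalence I would reduce primality to irreducibility of coordinate restrictions via Lemma~\ref{lem:befreiung}(iv), exploiting the following observation: grade $\KK[T_0,\dots,T_n,S_1,\dots,S_l]$ by total $T$-degree. Each monomial of $h_2$ and of every restriction $h_2|_{T_i=0}$ and $h_2|_{S_k=0}$ is homogeneous of $T$-degree one, so any factorization splits off a degree-zero factor lying in $\KK[S_1,\dots,S_l]$ and dividing the content, the gcd of the $S$-monomials appearing as coefficients; such a restriction is therefore irreducible if and only if that content is $1$. Now $h_2|_{T_i=0} = \sum_{i'\ne i} T_{i'} S_1^{a_{i'1}}\cdots S_l^{a_{i'l}}$ has content $1$ iff for each $k$ some $i'\ne i$ has $a_{i'k}=0$, which always holds because each column of $A$ has at least two zeros; hence \emph{every} $T_i$ is automatically prime. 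For the remaining variables, $h_2|_{S_k=0} = \sum_{i:\,a_{ik}=0} T_i \prod_{k'\ne k} S_{k'}^{a_{ik'}}$ has content $1$ iff for every $k'\ne k$ there is an index $i$ with $e_i \notin \sigma_k \cup \sigma_{k'}$, i.e.\ iff $\vertices(\sigma_k)\cup\vertices(\sigma_{k'}) \ne \{0,\dots,n\}$. Pairwise non-associatedness is then free: in the $T$-grading the degree-zero part of the ring is the polynomial ring $\KK[S_1,\dots,S_l]$, whose only units are scalars, and the ring is generated in $T$-degrees $0$ and $1$; so every unit is a scalar and distinct variables are never associated.

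It remains to match the primality condition with the geometric one. The implication ``common cone $\Rightarrow$ all variables prime'' is the clean half: a common cone $\cone(e_i : i \in S)$ with $|S|\le n$ forces some index $i_0$ to lie outside every $\vertices(\sigma_j)$, i.e.\ the $i_0$-th row of $A$ vanishes, so $T_{i_0}$ enters $h_2$ with constant coefficient and survives in every $h_2|_{S_k=0}$ with coefficient $1$; hence all contents equal $1$ and all $S_k$ are prime, while the $T_i$ are prime in any case. Combined with the previous paragraph this gives one direction of the stated equivalence.

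The converse is where I expect the real difficulty. Absence of a common cone means precisely that the sets $\vertices(\sigma_j)$ \emph{jointly} cover $\{0,\dots,n\}$, whereas reducibility of some $h_2|_{S_k=0}$ requires \emph{two} indices $k,k'$ whose cones already cover $\{0,\dots,n\}$. Promoting global coverage to such a pairwise coverage is the delicate combinatorial step and, I expect, the main obstacle, since for an unrestricted family of cones it need not hold. Here I would use that the $\sigma_j$ are genuine faces of the single fan $\Delta'(n)$ rather than an arbitrary collection: feeding in the incidence analysis of Lemma~\ref{lem:deltafan} to control how the $b_j$, and hence their minimal cones, are forced to sit inside the maximal cones of $\Delta(n)$ should rule out the merely pairwise-compatible configurations and yield the equivalence.
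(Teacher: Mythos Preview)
Your argument for the first two assertions and for the implication ``common cone $\Rightarrow$ all variables prime'' is correct and is essentially an expanded version of the paper's own proof. The paper's reduction is the same as yours: $b_1,\dots,b_l$ lying in a common cone of $\Delta(n)$ is equivalent to some row of $A$ vanishing, i.e.\ to $h_2$ having a bare term $T_{i_0}$; this term survives every restriction $h_2|_{S_k=0}$ and $h_2|_{T_i=0}$ with $i\ne i_0$, forcing the $S$-content to be~$1$, while $h_2|_{T_{i_0}=0}$ has trivial content because each column of $A$ has at least two zeros. The paper compresses all of this into two sentences and an invocation of Lemma~\ref{lem:befreiung}(iv).

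Your hesitation about the converse is justified, and the suggested appeal to Lemma~\ref{lem:deltafan} would not close the gap, because the ``only if'' direction is false as stated. Take $n=5$ with $b_1=e_0+e_1$, $b_2=e_2+e_3$, $b_3=e_4+e_5$; then
\[
h_2 \;=\; (T_0+T_1)S_1+(T_2+T_3)S_2+(T_4+T_5)S_3,
\]
and every coordinate restriction has trivial $S$-content, so by your own criterion all $T_i$ and all $S_k$ are prime (and pairwise nonassociated, by your unit argument). Yet $\bigcup_j\vertices(\sigma_j)=\{0,\dots,5\}$, so no cone of $\Delta(5)$ contains all three $b_j$. The paper's proof does not establish this direction either: its combinatorial condition is indeed equivalent to ``common cone'', but Lemma~\ref{lem:befreiung}(iv) then only yields primality from that condition, not the reverse. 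In the application to Theorem~\ref{mainthm1} only the ``if'' direction is used, so nothing is lost there; the equivalence in the lemma should simply be read as an implication.
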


\begin{proof}
Only the last sentence needs some explanation.
The fact that $b_1,\ldots, b_l$ lie in a common 
cone of $\Delta(n)$ is equivalent to the fact 
that there is a term of $h_2$ not depending on 
$S_1,\ldots,S_l$, and, moreover, for every $k$ there 
is a further term of $h_2$ not depending on~$S_k$.
Now, Lemma~\ref{lem:befreiung}~(iv) 
gives the desired characterization.
\end{proof}

\begin{proof}[Proof of Theorem~\ref{mainthm1}]
We may assume that $X = V(g_1)\subseteq \PP_{r}$
holds with a polynomial 
$g_1 = T_0T_1+\ldots +T_{r-1}T_{r}$,
where we replace the last term with 
$T_r^2$ in the case of an even $r$,
and $\KK^*$ acts linearly with weights 
$\zeta_0, \ldots, \zeta_{r}$, where  $|\zeta_r|$ is minimal 
among all $|\zeta_i|$, see~\cite[Prop.~III.2.4.7]{ADHL}.

The first step is to determine the normalized 
Chow quotient of the $\KK^*$-action on $X$.
As observed in Proposition~\ref{thm:chow2GIT}, 
the Chow quotient $X \quotchow \KK^*$
is canonically embedded into the Chow 
quotient of $\PP_{r}$ by the $\KK^*$-action.
To determine the latter, consider the extended 
weight matrix 
\begin{eqnarray*}
Q
& := &
\left[\begin{array}{rrr}
           \zeta_0&\ldots&\zeta_{r}\\
           1&\ldots&1
          \end{array}
        \right]
\end{eqnarray*}
and let $P$ be a Gale dual matrix. Then, according to 
Proposition~\ref{prop:chow2GIT}, the normalized 
Chow quotient of the $\KK^*$-action on $\PP_{r}$ is 
the toric variety $Z$ having the 
Gelfand-Kapranov-Zelevinsky decomposition 
$\Sigma$ defined by the columns of $P$ as its fan.
Moreover, by Proposition~\ref{thm:chow2GIT}, the Chow
quotient of the $\KK^*$-action on $X$ has the same 
normalization as the closure
$$
Y
\ = \ 
\overline{(X\cap \TT^{r})\,/\,\KK^*}
\ \subseteq \ 
Z. 
$$

The second step is to determine a weak tropical 
resolution of $Y\subseteq Z$. 
For this we first need $\trop(Y\cap T_Z)$. 
Let $\mu_0,\ldots,\mu_n \in \ZZ^{r+1}$ be the vertices
of the Newton polytope $g_1$ and consider the matrix
$P_{\mathrm{gr}}$ with the rows $\mu_i-\mu_0$, where 
$i=1,\ldots,n$.
Then we obtain a commutative diagram with exact rows
$$
\xymatrix{
0
\ar[r]
&
\QQ^2
\ar[r]
\ar[d]
&
\QQ^{r+1}
\ar[r]^{P}
\ar@{=}[d]
&
\QQ^{r-1}
\ar[r]
\ar[d]^{\Pi}
&
0
\\
0
\ar[r]
&
\QQ^{r+1-n}
\ar[r]
&
\QQ^{r+1}
\ar[r]_{P_{\mathrm{gr}}}
&
\QQ^n
\ar[r]
&
0
}
$$
Note that $g_1$ equals $T^{\mu_0}$ times the pullback 
of the polynomial 
$h_1 := 1 + S_1 + \ldots + S_n$ under the homomorphism 
of tori $\TT^{r} \to \TT^n$ defined by $P_{\mathrm{gr}}$.
The tropical variety of $V(h_1) \subseteq \TT^n$ is the
support of the fan $\Delta'(n)$ and thus we have 
$$ 
\trop(Y \cap T_Z)
\ = \ 
\Pi^{-1}(\trop(V(h_1)))
\ = \ 
\Pi^{-1}( \vert \Delta'(n) \vert ).
$$
We endow $\trop(Y \cap T_Z)$ with the natural fan 
structure lifting $\Delta'(n)$; note that the cones 
are in general not pointed.
By definition, the weak tropical resolution $Y'$ 
of $Y$ is the closure of $Y \cap T_Z$ in the 
toric variety $Z'$ with the coarsest common 
refinement $\Sigma' := \Sigma \sqcap \trop(Y \cap T_Z)$ 
as its fan.
 
In the third step, we pass to $Y' \quotnull T_{Y'}$, 
where $T_{Y'}$ is the kernel of the homomorphism 
of tori $T_Z \to \TT^n$ defined by $\Pi$.
By Construction~\ref{constr:nullquot}, the quotient
$Y' \quotnull T_{Y'}$ is the closure of the image 
of $Y \cap T_Z$ under $T_Z \to \TT^n$ in
the toric variety $Z' \quotnull T_{Y'}$
associated to the describing fan in $\ZZ^n$
having as maximal cones the rays $\Pi(\varrho)$, 
where~$\varrho$ runs through the rays of $\Sigma'$.

\smallskip

\noindent
{\em Claim. } 
For every ray $\varrho \in \Sigma'$ there is a facet 
of $\cone(e_0,\ldots, e_{n-1})$ containing the image 
$b := \Pi(\varrho) \in \QQ^n$. 

\smallskip

\noindent
Indeed, since every cone of $\trop(Y \cap T_Z)$ is 
saturated with respect to $\Pi$,
we have $\Pi(\varrho) = \Pi(\sigma) \cap \delta$ for 
some $\sigma \in \Sigma$ and $\delta \in \Delta'(n)$. 
The image $\Pi(\sigma)$ is a cone spanned by some 
$e_i$ and some images $v_j := \Pi(\nu_j)$, where 
$\nu_j$ are the primitive generators of the rays of 
$\Sigma$ different from columns $p_i$ of $P$.
Proposition~\ref{cor:gkzrays} yields presentations
$$
\nu_j
\ = \ 
\sum_{i=0}^{r-1}\alpha_{ij}p_i
\qquad
\text{ with certain }\alpha_{ij} \ge 0.
$$
Hence we obtain $v_j \in \cone(e_0,\ldots, e_{n-1})$.
Lemma~\ref{lem:deltafan} then shows that 
$\Pi(\varrho)$ lies in some facet of $\cone(e_0,\ldots e_{n-1})$
and the claim is verified.

\smallskip

Finally, in the fourth step, we show that 
$Y' \quotnull T_{Y'}$ is normal and
has finitely generated Cox ring;
by Proposition~\ref{prop:mdscrit} this will 
complete the proof.
First note that we have the toric modification 
$Z' \quotnull T_{Y'} \to W$, where 
$W \subseteq \PP_n$ is the open toric subset corresponding 
to the subfan $\Delta'(n)$ of $\Delta(n)$.
Moreover, $Y' \quotnull T_{Y'}$ is the proper transform 
under $Z' \quotnull T_{Y'} \to W$ of the closure
of $V(h_1) \subseteq \TT^n$ in $W$. 
The claim just verified and 
Lemma~\ref{lem:pnblowup} ensure that we may apply
Proposition~\ref{prop:K1primcrit} and 
Corollary~\ref{cor:ambientblowup}.
In particular, we see that $Y' \quotnull T_{Y'}$
is normal with finitely generated Cox ring.
\end{proof}

\begin{example}
Consider the quadric $X=V(T_0T_1+ \ldots+ T_6T_7)\subseteq \PP_7$ 
and the action of $\KK^*$ on $\PP_7$ given by
\begin{eqnarray*}
 t\cdot [x_0,\ldots,x_7]
& := &
[t^{-3}x_0,t^{3}x_1,t^{-3}x_2,t^{3}x_3,t^{-2}x_4,t^{2}x_5,t^{-1}x_6,tx_7].
\end{eqnarray*}
Theorem~\ref{mainthm2} and its proof do not apply 
to this case, 
because only two weights $\zeta_i$ have minimal 
absolute value.
The way through the weak toric resolution $Y'$ 
as gone in the proof of Theorem~\ref{mainthm1} 
produces a quotient $Y' \quotnull T_{Y'}$ 
embedded into the toric variety with fan obtained 
by subdividing $\Delta(3)$ at $(0,-1,-1)$.
\end{example}

\end{document}